\tikzstyle{vertex}=[circle, draw, inner sep=0pt, minimum size=6pt]
\newcommand{\lebn}
\theoremstyle{plain}
\newtheorem{thm}[equation]{Theorem}
\newtheorem{question}[equation]{Problem}
\newtheorem{lem}[equation]{Lemma}
\theoremstyle{definition}
\newtheorem{rem}[equation]{Remark}
\numberwithin{equation}{section}
\newcommand{\Q}{\mathbb{Q}}
\newcommand{\Z}{\mathbb{Z}}
\newcommand{\E}{\mathbb{E}}
\renewcommand{\prob}{\mathbb{P}}
\newcommand{\A}{\mathcal{A}}
\newcommand{\NE}{\mathcal{N}}
\newcommand{\lk}{\operatorname{link}}
\newcommand{\st}{\operatorname{star}}
\newcommand{\conn}{\operatorname{conn}}
\newcommand{\vsupp}{\operatorname{vsupp}}
\newcommand{\supp}{\operatorname{supp}}
\begin{document}
\bibliographystyle{plain}

\title[Topology of random $\lowercase{d}$-clique complexes]{Topology of random $d$-clique complexes}

\author[]{Demet Taylan}
\address{Department of Mathematics, Bozok University, Turkey.}
\email{demet.taylan@bozok.edu.tr}

\date{\today}

\begin{abstract} For a simplicial complex $X$, the $d$-clique complex $\Delta_d(X)$ is the simplicial complex having all subsets of vertices whose $(d + 1)$-subsets are contained by $X$ as its faces. We prove that if $p = n^{\alpha}$, with $\alpha < \max\{\frac{-1}{k-d +1},-\frac{d+1}{\binom{k}{d}}\}$ or $\alpha > \frac{-1}{\binom{2k+2}{d}}$, then the $k$-th reduced homology group of the random $d$-clique complex $\Delta_d(G_d(n,p))$ is asymptotically almost surely vanishing, and if $\frac{-1}{t}<\alpha < \frac{-1}{t+1}$ where $t = (\frac{(d+1)(k+1)}{\binom{(d+1)(k+1)}{d+1}-(k+1)})^{-1}$, then the $(kd + d -1)$-st reduced homology group of $\Delta_d(G_d(n,p))$ is asymptotically almost surely nonvanishing. This provides a partial answer to a question posed by Eric Babson.

\end{abstract}

\maketitle

\section{Introduction}
One of the famous results in random graph theory establishes that $p=\frac{log n}{n}$ is a threshold for the connectivity of Erd\H os and R\'enyi random graphs~\cite{erdos}. A $2$-dimensional analogue of Erd\H os and R\'enyi's result was obtained by Linial-Meshulam in~\cite{linial}. Meshulam-Wallach, in~\cite{meshulam}, presented a $d$-dimensional analogue for $d\geq 3$. In particular, Linial-Meshulam-Wallach theorem yields that $p=\frac{d log n}{n}$ is the threshold for the vanishing of the $(d-1)$-st homology of the random simplicial complex $G_d(n,p)$ with coefficients in a finite abelian group. Here, the random simplicial complex $G_d(n,p)$ is a $d$-dimensional simplicial complex on $[n]$ with a full $(d-1)$-dimensional skeleton and with $d$-dimensional faces are chosen independently each with probability $p$.

The topology of clique complexes of random graphs has been studied in~\cite{babson, costa, Kahle, kahle1, kahle3} and see also~\cite{kahle2} for a survey on random simplicial complexes. The analogue $\Delta_d(X)$ has been considered in~\cite{Babson}.

In this generalization, the $d$-clique complex $\Delta_d(X)$ of a finite simplicial complex $X$ is the simplicial complex on vertex set $V(X)$ of $X$ consisting of all the subsets $F\subseteq V(X)$ with $\binom{F}{d+1} \subseteq X$. As a matter of definition, $\Delta_d(X)$ contains $X$ and the full $(d-1)$-skeleton of the simplex with vertices $V(X)$.

The following is among the problems proposed by Eric Babson in the First Research School on Commutative Algebra and Algebraic Geometry (RSCAAG):

\begin{question}\cite{Babson}\label{Question: Problem on finding threshold}
Find thresholds for $H_k(X;\Q)$ to vanish with $X\in \Delta_d(G_d(n,n^{-\alpha}))$.
\end{question}

An idea suggested by Babson for this problem is to try the techniques used for the clique complexes $\Delta_1(G_1(n,n^{-\alpha}))$ of random graphs $G_1(n,n^{-\alpha})$ and is that there may be analogues of the theorems about $\Delta_1(G_1(n,n^{-\alpha}))$ (see~\cite{Babson}). We provide here a partial answer to Problem~\ref{Question: Problem on finding threshold}. In particular, we have the following.

\begin{thm}\label{thm: vanishing results on d-clique complex of random simplicial complexes}
If $p = n^{\alpha}$ then

\begin{enumerate}[label=(\roman*)]
\item if $\alpha < \max\{\frac{-1}{k-d +1},-\frac{d+1}{\binom{k}{d}}\}$ or $\alpha > \frac{-1}{\binom{2k+2}{d}}$ then for the $k$-th reduced homology group of the $d$-clique complex $\Delta_d(G_d(n,p))$ of the random simplicial complex $G_d(n,p)$ we have asymptotically almost surely $\tilde{H}_k(\Delta_d(G_d(n,p)), \Z) = 0$,
\item and if $\frac{-1}{t}<\alpha < \frac{-1}{t+1}$ with $t = (\frac{(d+1)(k+1)}{\binom{(d+1)(k+1)}{d+1}-(k+1)})^{-1}$ then asymptotically almost surely $\tilde{H}_{(k+1)d-1} (\Delta_d(G_d(n,p)), \Z) \neq 0$ holds.
\end{enumerate}
\end{thm}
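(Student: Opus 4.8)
The plan is to follow the template of Kahle's analysis of the ordinary clique complex $\Delta_1(G_1(n,p))$, adding an induction on $d$ that passes to vertex links. Write $X_n=\Delta_d(G_d(n,p))$, $p=n^{\alpha}$. Three elementary facts organize everything: $X_n$ always contains the full $(d-1)$-skeleton on $[n]$; for $j\ge d$ the expected number of $j$-faces of $X_n$ is $\asymp n^{(j+1)+\alpha\binom{j+1}{d+1}}$; and if a set $\tau$ of $j$ vertices is a face, a fixed new vertex $v$ extends it to a face with probability $p^{\binom{j}{d}}$ (only the $(d+1)$-subsets meeting $v$ are new). For the sparse half of (i): if $\alpha<-\tfrac{d+1}{\binom{k}{d}}$ then, since $\tfrac{d+1}{\binom{k}{d}}=\tfrac{k+1}{\binom{k+1}{d+1}}$, the expected number of $k$-faces of $X_n$ tends to $0$, so a.a.s.\ $C_k(X_n)=0$ and $\tilde H_k(X_n)=0$ trivially. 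For the threshold $-\tfrac1{k-d+1}$ I would induct on $d$: the base case $d=1$ is Kahle's theorem $\tilde H_{k-d+1}(\Delta_1(G_1(n,p)))=0$, valid for $\alpha<-\tfrac1{k-d+1}$; for the inductive step, splitting $X_n=\st(v)\cup\dl(v,X_n)$ at the last vertex makes $\st(v)$ a cone, so from the Mayer--Vietoris sequence $\tilde H_k(X_n)=0$ follows once $\tilde H_k(\dl(v,X_n))=0$ (induction on $n$, in the style of Meshulam--Wallach) and $\tilde H_{k-1}(\lk(v,X_n))=0$. One has $\lk(v,X_n)=\Delta_d(G_d(n-1,p))\cap\Delta_{d-1}(Z_v)$ with $Z_v=\{S:S\cup\{v\}\in G_d(n,p)\}$ an independent Bernoulli$(p)$ $d$-uniform hypergraph, so the link is ``one dimension lower''; running the induction for a suitable family of such intersected random complexes, with the homological degree dropping by one per descent, reaches $\tilde H_{k-d+1}$ of a $1$-clique complex after $d-1$ steps. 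Taking the larger threshold gives $\alpha<\max\{-\tfrac1{k-d+1},-\tfrac{d+1}{\binom{k}{d}}\}$.

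For the dense half of (i) I would show $X_n$ is a.a.s.\ $k$-connected (hence $\tilde H_k(X_n)=0$). Call $v$ a \emph{$d$-apex} of $W\subseteq[n]$ if $S\cup\{v\}\in G_d(n,p)$ for every $d$-subset $S\subseteq W$; then $v*\sigma\in X_n$ whenever $\sigma\subseteq W$ and all $(d+1)$-subsets of $\sigma$ are $d$-faces. A fixed $v$ is a $d$-apex of a fixed $(2k+2)$-set with probability $p^{\binom{2k+2}{d}}$, so the expected number of $(2k+2)$-sets with no $d$-apex is at most $\binom{n}{2k+2}(1-p^{\binom{2k+2}{d}})^{n}\le n^{2k+2}\exp(-n^{1+\alpha\binom{2k+2}{d}})$, which vanishes in the limit precisely when $\alpha>-\tfrac1{\binom{2k+2}{d}}$. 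Thus a.a.s.\ every set of at most $2k+2$ vertices has a $d$-apex, and the standard filling argument --- reducing a simplicial $j$-sphere for $j\le k$ by coning off its local pieces, each involving at most $2j+2$ vertices --- yields $k$-connectivity.

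For (ii), set $N=(d+1)(k+1)$ and fix $N$ vertices partitioned into blocks $B_1,\dots,B_{k+1}$ of size $d+1$. If every $(d+1)$-subset of these vertices is a $d$-face of $G_d(n,p)$ except the blocks $B_i$, which are required to be non-faces, then the induced subcomplex of $X_n$ on them is the join $(\partial\Delta^{d})^{*(k+1)}\cong S^{(k+1)d-1}$ (each $\partial\Delta^{d}\cong S^{d-1}$, the only missing simplex being $B_1\cup\dots\cup B_{k+1}$). This configuration has $N$ vertices and $\binom{N}{d+1}-(k+1)$ required $d$-faces, hence density exactly $t$; a routine check shows it is strictly balanced, and since $p\to0$ makes the non-face requirements free, a first- and second-moment argument gives, for $\alpha>-\tfrac1t$, an induced copy $\Sigma\cong S^{(k+1)d-1}$ a.a.s. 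To convert its presence into nonvanishing homology I would pick $\Sigma$ so that, in addition, no facet of $\Sigma$ lies in a larger face of $X_n$. A facet $\phi$ of $\Sigma$ consists of a $d$-subset from every block, so $|\phi|=(k+1)d$, and it lies in a larger face exactly when some vertex $v$ outside the $N$ chosen ones is a $d$-apex of $\phi$, an event of probability $p^{\binom{(k+1)d}{d}}$ per $v$. Hence the expected number of triples (present $\Sigma$, facet of it, external $d$-apex of that facet) exceeds the expected number of present copies of $\Sigma$ only by a factor $\asymp n\,p^{\binom{(k+1)d}{d}}=n^{1+\alpha\binom{(k+1)d}{d}}$; because $t+1\le\binom{(k+1)d}{d}$ the hypothesis $\alpha<-\tfrac1{t+1}$ forces $\alpha<-\tfrac1{\binom{(k+1)d}{d}}$, so this factor tends to $0$, and Markov's inequality together with the second-moment lower bound on present copies produces a $\Sigma$ with the extra property. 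For such a $\Sigma$ each facet $\phi$ is a facet of $X_n$, hence is contained in no $(k+1)d$-dimensional face of $X_n$ and so has coefficient $0$ in the boundary of every chain in $C_{(k+1)d}(X_n)$; since $\phi$ has coefficient $\pm1$ in the fundamental cycle $z$ of $\Sigma$, $z$ is not a boundary in $X_n$, giving $\tilde H_{(k+1)d-1}(X_n;\Z)\neq0$.

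The main obstacle is part (i): the link of a vertex is not itself a clique complex of a random complex but an intersection of two independent such complexes, so the induction must be set up for a broader family of ``mixed'' random complexes while keeping all the thresholds intact, on top of the usual uniformity-in-$n$ bookkeeping of the Meshulam--Wallach method. The sphere construction in (ii) is comparatively clean; there the only real work is the strict-balancedness check and the second-moment estimate for the appearance of the induced sphere, and the stated threshold $-\tfrac1{t+1}$ is simply the convenient bound those estimates deliver --- for $d\ge2$ it need not be optimal.
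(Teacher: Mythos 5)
Your handling of part (ii), of the dense regime $\alpha > \frac{-1}{\binom{2k+2}{d}}$, and of the dimension threshold $-\frac{d+1}{\binom{k}{d}}$ matches the paper's proof in all essentials: the same sphere $*_k\partial(\Delta_{d+1})$ of density $t$, the same inequality $t+1\le\binom{(k+1)d}{d}$ used to rule out an external apex of a facet when $\alpha<\frac{-1}{t+1}$, and the same conclusion that a maximal facet of the embedded sphere prevents its fundamental cycle from bounding. For the dense regime the paper packages your ``every $(2k+2)$-set has a $d$-apex'' computation as Lemma~\ref{lem: domination number and d-clique complex} and then invokes the Aharoni--Berger bound $\conn(\Delta)\ge\tilde{\gamma}(\Delta)/2-2$ (Theorem~\ref{thm: homotopic connectivity bound-aharoni}); that bound is precisely the nontrivial content hiding inside your ``standard filling argument'' (the flag-complex version of the filling argument does not apply here since $\Delta_d(X)$ is not flag for $d\ge 2$), so you should cite it rather than gesture at re-deriving it.

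The genuine gap is the threshold $\alpha<\frac{-1}{k-d+1}$ in part (i). You propose a Mayer--Vietoris induction through vertex links reducing to Kahle's $d=1$ theorem, and you yourself observe that $\lk(v)$ is not the $d$-clique complex of a random complex but the intersection of $\Delta_d(G_d(n-1,p))$ with an independent random ``$(d-1)$-clique-type'' structure, so the induction would have to run over a family of mixed complexes with all thresholds tracked, on top of Meshulam--Wallach-style uniformity in $n$. As written this is a plan, not a proof: the family is not defined, it is not verified that the vanishing threshold for each mixed link complex remains $\frac{-1}{k-d+1}$ as both the homological degree and the clique parameter drop, and the probabilistic bookkeeping of the induction on $n$ is not addressed. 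The paper avoids all of this with a direct first-moment count: by Lemma~\ref{lem: nontrivial cycles on strongly connected simplicial complexes} a minimal nontrivial $k$-cycle restricts to a nontrivial $(k-1)$-cycle in every vertex link, whence Lemma~\ref{lem: number of d-1 faces in nontrivial cycle} gives at least $(d+1)(k-d)+d+1$ faces of dimension $d-1$ in the link of each of the $m$ support vertices; double counting yields at least $m(k-d+1)$ $d$-faces in the support complex, so its expected number of appearances is at most $n^{m(1+\alpha(k-d+1))}=o(1)$ once $\alpha(k-d+1)<-1$, and Lemma~\ref{lem: homology is generated by cycles on small vertex sets} bounds the vertex support so that only finitely many isomorphism types need be excluded (Theorem~\ref{thm: a vanishing homology result}). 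To complete your argument you should either carry out this counting or actually construct the mixed-complex induction, which is substantially more delicate than your sketch suggests.
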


We note that when $d = 1$, Theorem~\ref{thm: vanishing results on d-clique complex of random simplicial complexes} reduces to Corollary 3.7 in~\cite{Kahle} with one difference: Kahle, in~\cite{Kahle}, improves the sufficient condition $\alpha > \frac{-1}{2k+2}$ for vanishing homology of $\Delta_1(G_1(n,p))$ to $\alpha > \frac{-1}{2k+1}$.

\section{Preliminaries}\subsection{Simplicial Complexes}\label{preliminaries: simplicial complex}
An \emph {abstract simplicial complex} $\Delta$ on a finite vertex set $V$ (or $V(\Delta)$) is a set of subsets of $V$, called \emph{faces}, satisfying the following properties:
\begin{enumerate}
\item $\{v\}\in \Delta$ for all $v\in V$.
\item If $F\in\Delta$ and $H\subseteq F$, then $H\in\Delta$.
\end{enumerate}

For a given a subset $U\subset V$, the complex $\Delta[U]:=\{\sigma\colon \sigma\in \Delta\;\textrm{,}\;\sigma\subseteq U\}$ is called the \emph{induced subcomplex} by $U$. The number of $i$-dimensional faces of a simplicial complex $\Delta$ will be denoted by $f_i(\Delta)$ and the dimension of $\Delta$ by $\dim(\Delta)$. A $d$-dimensional simplex and its boundary are denoted by $\Delta_{d+1}$ and $\partial(\Delta_{d+1})$, respectively. 

The join of two simplicial complexes $\Delta_0$ and $\Delta_1$ is denoted by 
$\Delta_0*\Delta_1$. An $n$-fold join $\underbrace{\Delta*\Delta*\dots*\Delta}_{n\; \text{times} \;\Delta}$ and $k$-dimensional skeleton of a simplicial complex $\Delta$ will be denoted by $*_{n-1} \Delta$ and $\Delta^{(k)}$, respectively.

Let $\Delta$ be a simplicial complex. For a given face $\sigma$, the \emph{link} $\lk_\Delta(\sigma)$ and the star $\st_\Delta(\sigma)$ are defined respectively by $\lk_\Delta(\sigma)=\{\tau\in\Delta\colon\tau\cap \sigma=\emptyset\;\textrm{and}\;\tau\cup\sigma\in\Delta\}$ and $\st_\Delta(\sigma) = \{\tau\in\Delta\colon\tau\cup\sigma\in\Delta\}$. For a vertex $x$ in $\Delta$, we abbreviate $\lk_\Delta(\{x\})$ and $\st_\Delta(\{x\})$ to $\lk_{\Delta}(x)$ and $\st_{\Delta}(x)$ (or simply $\lk(x)$ and $\st(x)$ if no confusion arises), respectively.

A \emph{strongly connected} simplicial complex is a pure simplicial complex in which for each pair of facets $(\sigma, \tau)$ there is a sequence of facets $\sigma = \sigma_0,\sigma_1,\dots,\sigma_{n-1} = \tau$ such that the intersection $\sigma_i\cap \sigma_{i+1}$ of any two consecutive elements in the sequence is a codimension one face of both $\sigma_i$ and $\sigma_{i+1}$. 

We refer the reader to~\cite{hatcher} for background on simplicial homology. For a $d$-chain $C$ of a simplicial complex $\Delta$, the set of all $d$-simplices appearing in $C$ with non-zero coefficients is called the~\emph{support} $\supp(C)$ of $C$. We denote the simplicial complex obtained by taking the downwards closure of $\supp(C)$ with respect to containment by $\Delta(\supp(C))$. The \emph{vertex support} $\vsupp(C)$ is the vertex set of $\Delta(\supp(C))$.

For any minimal representative $\gamma$ of a class in the reduced homology groups $\tilde{H}_{d}(\Delta;\Z)$ of a simplicial complex $\Delta$ with coefficients in $\Z$, the associated simplicial complex $\Delta(\supp(\gamma))$ is a strongly connected $d$-dimensional subcomplex of $\Delta$.

Let $\gamma$ be a nontrivial $k$-cycle in a simplicial complex $\Delta$, with minimal vertex support. Then $\gamma\cap \lk_{\Delta}(v)$ for $v\in \vsupp(\gamma)$ is defined as a $\Z$-linear combination of $(k-1)$-dimensional faces appearing in $\Delta(\supp(\gamma))\cap \lk_{\Delta}(v)$. See~\cite{Kahle} for further details.

\begin{lem}\label{lem: nontrivial cycles on strongly connected simplicial complexes} \cite{Kahle}
If $\gamma$ is a nontrivial $k$-cycle in a simplicial complex $\Delta$, with minimal vertex support, then $\gamma\cap \lk_{\Delta}(v)$ is a nontrivial $(k-1)$-cycle in $\lk_{\Delta}(v)$ for any element $v$ in the vertex support $\vsupp(\gamma)$ of $\gamma$.
\end{lem}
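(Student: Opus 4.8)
The plan is to establish the two halves of the claim in order: first that $\gamma\cap\lk_\Delta(v)$ is a $(k-1)$-cycle whose faces lie in $\lk_\Delta(v)$, by a direct computation with $\partial$; and then that it is homologically nontrivial there, by a cone-off argument that contradicts the minimality of $\vsupp(\gamma)$.

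For the first part I would fix orientations and, for $v\in\vsupp(\gamma)$, write $v\ast\sigma$ for the simplex $\{v\}\cup\sigma$ whenever $v\notin\sigma$, extended linearly to chains. Grouping the $k$-faces of $\gamma$ according to whether or not they contain $v$ yields a decomposition $\gamma = v\ast\beta + \rho$, where $\rho$ is the part of $\gamma$ supported on $k$-faces that avoid $v$ and $\beta$ is the $(k-1)$-chain recording, with the induced signs, the faces $\sigma$ with $v\ast\sigma\in\supp(\gamma)$; by construction $\gamma\cap\lk_\Delta(v)=\beta$, and every face occurring in $\beta$ lies in $\lk_\Delta(v)$ since $v\ast\sigma\in\Delta$ forces $\sigma\in\lk_\Delta(v)$. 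Applying the cone identity $\partial(v\ast c)=c-v\ast\partial c$ (valid in reduced homology with the usual conventions), the relation $\partial\gamma=0$ becomes $\beta - v\ast\partial\beta + \partial\rho = 0$. Since neither $\beta$ nor $\partial\rho$ involves the vertex $v$, equating the parts supported on $k$-faces through $v$ forces $v\ast\partial\beta=0$, hence $\partial\beta=0$; thus $\beta$ is a $(k-1)$-cycle in $\lk_\Delta(v)$ (and, as a byproduct, $\partial\rho=-\beta$).

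For nontriviality I would argue by contradiction: suppose $\beta=\partial\eta$ for some $k$-chain $\eta$ of $\lk_\Delta(v)$. Then $v\ast\eta$ is a $(k+1)$-chain of $\st_\Delta(v)\subseteq\Delta$, and
\[
\gamma + \partial(v\ast\eta) = (v\ast\beta+\rho) + (\eta - v\ast\partial\eta) = (v\ast\beta + \rho) + (\eta - v\ast\beta) = \rho + \eta .
\]
So $\gamma':=\rho+\eta$ is a $k$-cycle homologous to $\gamma$ — in particular still nontrivial — whose vertex support omits $v$. If the bounding chain $\eta$ can be chosen with $\vsupp(\eta)\subseteq\vsupp(\gamma)$, then $\vsupp(\gamma')\subseteq\vsupp(\gamma)\setminus\{v\}$, contradicting the minimality of $\vsupp(\gamma)$; hence $\beta$ is not null-homologous in $\lk_\Delta(v)$. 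The way I would try to secure the side condition on $\eta$ is to invoke the fact recalled just before the lemma: since $\gamma$ is a minimal representative, $\Delta(\supp(\gamma))$ is strongly connected of the same dimension, so one may work inside it — reading $\lk_\Delta(v)$ as $\lk_{\Delta(\supp(\gamma))}(v)$, whose vertex set is automatically contained in $\vsupp(\gamma)$ — and there a filling $\eta$ of $\beta$ has $\vsupp(\eta)\subseteq\vsupp(\gamma)$ for free.

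The boundary bookkeeping of the first two steps is routine; the genuine obstacle is this last point — pinning down the exact sense of "minimal vertex support" and explaining why no filling of $\gamma\cap\lk_\Delta(v)$ in the \emph{full} link $\lk_\Delta(v)$ can escape $\vsupp(\gamma)$, since it is nontriviality in $\lk_\Delta(v)$ (not just in $\lk_{\Delta(\supp(\gamma))}(v)$) that the later recursion exploits when it identifies the link as again a $d$-clique complex of a random simplicial complex. Reconciling the reduction to the strongly connected subcomplex $\Delta(\supp(\gamma))$ with this requirement, and checking along the way that $\gamma\cap\lk_\Delta(v)$ is unambiguously the $(k-1)$-chain whose faces lie in $\Delta(\supp(\gamma))\cap\lk_\Delta(v)$ as set up before the statement, is where the real care is needed; the rest is formal manipulation of chains and the cone operator.
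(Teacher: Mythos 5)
The paper does not actually prove this lemma --- it is quoted from~\cite{Kahle} --- so your attempt has to stand on its own. The first half of your argument is complete and correct: the decomposition $\gamma=v\ast\beta+\rho$, the cone identity, and the separation of the terms through $v$ do give $\partial\beta=0$ with $\beta$ supported in $\lk_\Delta(v)$. In the second half you have put your finger on exactly the right difficulty, but you have not closed it: the cycle $\gamma'=\gamma+\partial(v\ast\eta)=\rho+\eta$ contradicts minimality of $\vsupp(\gamma)$ only when the filling $\eta$ satisfies $\vsupp(\eta)\subseteq\vsupp(\gamma)$, and a filling taken in the full link $\lk_\Delta(v)$ need not. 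Your fallback of working inside $\Delta(\supp(\gamma))$ is a correct proof, but of a strictly weaker statement: nontriviality of $\gamma\cap\lk_\Delta(v)$ in $\lk_{\Delta[\vsupp(\gamma)]}(v)$, not in $\lk_\Delta(v)$ as the lemma asserts. Since your write-up explicitly leaves the reconciliation of these two versions open, it is not a proof of the stated lemma.

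Moreover, this gap cannot be filled, because the full-link version is false. Take $d=1$ and let $\Delta$ be the clique complex of the graph on $\{v_1,v_2,v_3,v_4,w\}$ whose edges are the $4$-cycle $v_1v_2,\,v_2v_3,\,v_3v_4,\,v_4v_1$ together with $wv_1,\,wv_2,\,wv_4$; its only $2$-faces are $\{v_1,v_2,w\}$ and $\{v_1,v_4,w\}$, and $\tilde{H}_1(\Delta)\cong\Z$. The $1$-cycle $\gamma$ running around $v_1v_2v_3v_4$ is nontrivial and has minimal vertex support (every representative of its class, indeed every nontrivial $1$-cycle of $\Delta$, uses at least four vertices), yet $\gamma\cap\lk_\Delta(v_1)=v_2-v_4$ bounds the path $v_2,w,v_4$ inside $\lk_\Delta(v_1)$ and is therefore trivial there. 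So the correct statement --- and the one your $\Delta(\supp(\gamma))$ argument actually establishes --- asserts nontriviality in the link taken within the induced subcomplex on $\vsupp(\gamma)$. That weaker version is what the later inductive argument (the count of $(d-1)$-faces) genuinely needs, but one must then also verify that this induced link is again a complex to which the induction hypothesis applies; that verification, not the chain manipulation, is where the care you flagged must be spent.
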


For $k\geq 0$, a topological space $X$ is said to be \emph{$k$-connected} if for every $i\leq k$, every continuous function $f$ from an $i$-dimensional sphere $S^i$ into $X$ is homotopic to a constant map. By convention, $(-1)$-connected means nonempty. The connectivity $\conn(X)$ of a topological space $X$ is the largest $k$ for which $X$ is $k$-connected. 

Aharoni and Berger, in~\cite{aharoni}, introduce a domination parameter $\tilde{\gamma}(\Delta)$ of a simplicial complex $\Delta$ on $V$, which is defined as the minimal size of a set $A\subseteq V$ such that $\tilde{sp}_{\Delta}(A) = V$ where $\tilde{sp}_{\Delta}(A) =\{v\in V\colon \textrm{there exists some face}\; \sigma\subseteq A\;\textrm{such that}\; \sigma\cup \{v\} \notin \Delta\}$. Call a set $A\subseteq V$ with the property that $\tilde{sp}_{\Delta}(A) = V$ a \emph{strong dominating set} of $\Delta$. The following result which relates the connectivity $\conn(\Delta)$ of a simplicial complex $\Delta$ to the parameter $\tilde{\gamma}(\Delta)$ is due to Aharoni and Berger~\cite{aharoni} and see also~\cite{meshulam2,meshulam3} for the particular case where $\Delta$ is a flag simplicial complex.

\begin{thm}\cite{aharoni}\label{thm: homotopic connectivity bound-aharoni} Let $\Delta$ be a simplicial complex on $V$. Then we have $conn(\Delta)\geq \frac{\tilde{\gamma}(\Delta)}{2}-2$.
\end{thm}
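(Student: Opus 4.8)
The plan is to prove the equivalent inequality $\eta(\Delta)\ge \tfrac12\tilde{\gamma}(\Delta)$, where $\eta(\Delta):=\conn(\Delta)+2$ is the shifted (homotopic) connectivity, so that the asserted bound $\conn(\Delta)\ge\tfrac12\tilde{\gamma}(\Delta)-2$ is exactly $\eta(\Delta)\ge\tfrac12\tilde{\gamma}(\Delta)$, and to argue by induction on the number of vertices $|V|$. If $\Delta$ admits no strong dominating set (for instance if $\Delta$ is a cone) we set $\tilde{\gamma}(\Delta)=\infty$; but then $\Delta$ is contractible and the bound holds trivially, so we may assume $\tilde{\gamma}(\Delta)$ is finite. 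The base cases (small $|V|$, or $\tilde{\gamma}(\Delta)\le 2$, where the claim only asserts $\conn(\Delta)\ge -1$, i.e. nonemptiness, which is guaranteed by the existence of a strong dominating set) are immediate.

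For the inductive step I would fix a vertex $v$ that is neither a cone point nor adjacent to every other vertex, and split $\Delta=\st_\Delta(v)\cup\dl_\Delta(v)$ with $\st_\Delta(v)\cap\dl_\Delta(v)=\lk_\Delta(v)$. Since $\st_\Delta(v)$ is a cone it is contractible, so this decomposition exhibits $\Delta$, up to homotopy, as the mapping cone of the inclusion $\lk_\Delta(v)\hookrightarrow\dl_\Delta(v)$. The standard connectivity estimate for a homotopy pushout then gives
\[
\conn(\Delta)\ \ge\ \min\{\conn(\dl_\Delta(v)),\ \conn(\lk_\Delta(v))+1\},
\]
equivalently $\eta(\Delta)\ge\min\{\eta(\dl_\Delta(v)),\ \eta(\lk_\Delta(v))+1\}$. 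Both $\dl_\Delta(v)$ and $\lk_\Delta(v)$ have strictly fewer vertices than $\Delta$, so the induction hypothesis applies to each, and the problem is reduced to controlling how $\tilde{\gamma}$ behaves under deletion and under taking links.

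For the link I would argue $\tilde{\gamma}(\Delta)\le\tilde{\gamma}(\lk_\Delta(v))+2$ directly. Given a strong dominating set $B$ of $\lk_\Delta(v)$, the set $B\cup\{v,w\}$, where $w$ is any non-neighbour of $v$, strong-dominates $\Delta$: a vertex of $\lk_\Delta(v)$ is blocked in $\Delta$ by the face that witnesses its blocking in $\lk_\Delta(v)$ enlarged by $v$; a vertex outside $\st_\Delta(v)$ is blocked by the face $\{v\}$; and $v$ itself is blocked by the face $\{w\}$. Consequently $\eta(\lk_\Delta(v))+1\ge\tfrac12\tilde{\gamma}(\lk_\Delta(v))+1\ge\tfrac12(\tilde{\gamma}(\Delta)-2)+1=\tfrac12\tilde{\gamma}(\Delta)$, so the link branch of the minimum is always safe.

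The main obstacle is the deletion branch: I must choose $v$ so that, simultaneously, $\tilde{\gamma}(\dl_\Delta(v))\ge\tilde{\gamma}(\Delta)$, which can fail for an arbitrary vertex, since deleting a ``hard to dominate'' vertex may strictly lower the parameter. My plan here is an extremal argument. If some vertex already satisfies both $\tilde{\gamma}(\dl_\Delta(v))\ge\tilde{\gamma}(\Delta)$ and the link bound, we use it. Otherwise every vertex $v$ admits a strong dominating set of $\dl_\Delta(v)$ of size $<\tilde{\gamma}(\Delta)$; such a set must necessarily fail to block $v$, for if it blocked $v$ it would be a strong dominating set of $\Delta$ of size $<\tilde{\gamma}(\Delta)$, contradicting minimality. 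One then has to show that this rigidity forces the existence of a vertex along which the deletion does not decrease the parameter, possibly after iterating the star decomposition down a sequence of deletions while tracking $\tilde{\gamma}$ at each stage. Engineering this coordinated choice of $v$, so that the two induction hypotheses combine to give $\eta(\Delta)\ge\min\{\tfrac12\tilde{\gamma}(\Delta),\tfrac12\tilde{\gamma}(\Delta)\}=\tfrac12\tilde{\gamma}(\Delta)$, is where the real content lies and is the step I expect to demand the most care.
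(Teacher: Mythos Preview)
The paper does not prove this theorem; it is quoted from Aharoni--Berger \cite{aharoni} as a preliminary result and used as a black box in the proof of Theorem~\ref{thm: connectivity result for d-clique complex}. There is therefore no in-paper proof to compare your sketch against.

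Assessing your sketch on its own merits: the Meshulam-style decomposition $\Delta=\st_\Delta(v)\cup\dl_\Delta(v)$ and the resulting bound $\eta(\Delta)\ge\min\{\eta(\dl_\Delta(v)),\eta(\lk_\Delta(v))+1\}$ are correct, and your verification that $\tilde{\gamma}(\Delta)\le\tilde{\gamma}(\lk_\Delta(v))+2$ (so the link branch always suffices) is fine. The genuine gap is exactly where you locate it: the deletion branch. You need some vertex $v$ with $\tilde{\gamma}(\dl_\Delta(v))\ge\tilde{\gamma}(\Delta)$, and you have not produced one. Your extremal idea---if every deletion strictly lowers $\tilde{\gamma}$, derive a contradiction---is only stated, not executed. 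The observation that a minimum strong dominating set $A_v$ of $\dl_\Delta(v)$ of size $<\tilde{\gamma}(\Delta)$ must fail to block $v$ (so every face contained in $A_v$ extends to a face through $v$) is correct, but by itself it does not force a contradiction; it only says each $A_v$ sits inside $\lk_\Delta(v)$ face-wise, and it is not evident how to combine these constraints across all $v$. The hand-wave about ``iterating the star decomposition down a sequence of deletions'' does not come with any bookkeeping showing the induction still closes. As written this is a plausible plan with the key step missing, and the actual Aharoni--Berger argument does not proceed by simply finding such a $v$; closing this gap would require substantive additional work.
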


An event that depends on $n$ is said to occur~\emph{asymptotically almost surely (a.a.s.)} if the probability of the event approaches to $1$ as $n\rightarrow \infty$. 

\begin{thm}\label{thm: threshold for a subcomplex containment}\cite{Babson}
Let $\Delta$ be a simplicial complex on $V$. If $\Delta$ is $d$-lumpless (i.e. $\frac{|S|}{f_d(\Delta[S])} > \frac{|V|}{f_d(\Delta)}$ for every $\emptyset \subset S \subset V$) then $-\frac{|V|}{f_d(\Delta)}$ is a threshold for the event that $\Delta$ is a subcomplex of $\Delta_d(G_d(n,n^{\alpha}))$. If $\Delta$ is a $d$-lumpless $d$-complex then $-\frac{|V|}{f_d(\Delta)}$ is a threshold for $G_d(n,n^{\alpha})$ to contain a copy of $\Delta$.
\end{thm}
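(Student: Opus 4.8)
The plan is to prove both assertions via the first and second moment methods applied to the random variable $X$ counting copies of $\Delta$, with the $d$-lumplessness hypothesis entering only in the variance estimate. First I would observe that both parts share the same combinatorial core. A set $F\subseteq[n]$ is a face of $\Delta_d(G_d(n,p))$ exactly when every $(d+1)$-subset of $F$ is a $d$-face of $G_d(n,p)$; since every $(d+1)$-subset of a face of $\Delta$ is itself a $d$-face of $\Delta$, an injection $\phi\colon V\to[n]$ realizes $\Delta$ as a subcomplex of $\Delta_d(G_d(n,p))$ if and only if $\phi$ sends each of the $f_d(\Delta)$ many $d$-faces of $\Delta$ to a $d$-face of $G_d(n,p)$; for a $d$-complex this is also exactly the condition that $G_d(n,p)$ contain a copy of $\Delta$. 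Thus in both cases the event in question is $\{X\geq 1\}$, where $X$ counts these injections $\phi$. Because the $d$-faces of $G_d(n,p)$ are independent with probability $p=n^{\alpha}$, each $\phi$ succeeds with probability $p^{f_d(\Delta)}$, and summing over the $\Theta(n^{|V|})$ injections gives $\E[X]=\Theta(n^{|V|+\alpha f_d(\Delta)})$. Write $\alpha^{*}=-|V|/f_d(\Delta)$.

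For $\alpha<\alpha^{*}$ the exponent $|V|+\alpha f_d(\Delta)$ is negative, so $\E[X]\to 0$ and Markov's inequality gives $\prob[X\geq 1]\leq\E[X]\to 0$; hence a.a.s.\ $\Delta$ does not appear, and lumplessness is not needed in this direction (it only guarantees that no proper subcomplex is denser, so that $\alpha^{*}$ is not preempted by a denser piece). For $\alpha>\alpha^{*}$ we have $\E[X]\to\infty$, and I would apply Chebyshev's inequality $\prob[X=0]\leq\operatorname{Var}(X)/\E[X]^{2}$, so that it suffices to prove $\operatorname{Var}(X)=o(\E[X]^{2})$. Expanding $\operatorname{Var}(X)=\sum_{\phi,\psi}\bigl(\prob[\phi,\psi\ \text{succeed}]-\prob[\phi]\,\prob[\psi]\bigr)$ and noting that pairs sharing no $d$-face depend on disjoint sets of $d$-faces and so contribute zero, I would group the remaining ordered pairs by their overlap: let $W$ be the shared vertex set, $j=|W|$, and let $J\subseteq\Delta$ (via $\phi^{-1}$) be the subcomplex of shared $d$-faces, with $m=f_d(J)\geq 1$. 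There are $\Theta(n^{2|V|-j})$ such pairs, each contributing at most $p^{2f_d(\Delta)-m}$, so their total relative weight against $\E[X]^{2}=\Theta(n^{2|V|}p^{2f_d(\Delta)})$ is $\Theta(n^{-j}p^{-m})=\Theta(n^{-(j+\alpha m)})$. It therefore suffices to verify $j+\alpha m>0$ for every such overlap.

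This is exactly where $d$-lumplessness enters. One must be careful that not every shared vertex need lie in a shared $d$-face, so $j$ and $m$ must be tracked separately; fortunately extra shared vertices only enlarge $j$, which helps, so it is enough to bound $|V(J)|+\alpha m$ from below, where $V(J)$ is the vertex set of $J$. Since $J\subseteq\Delta[V(J)]$ we have $m\leq f_d(\Delta[V(J)])$, and when $V(J)$ is a proper nonempty subset of $V$, lumplessness gives $|V(J)|/m\geq |V(J)|/f_d(\Delta[V(J)])>|V|/f_d(\Delta)=-\alpha^{*}$, whence $|V(J)|+\alpha^{*}m>0$ and therefore $j+\alpha m\geq|V(J)|+\alpha m>|V(J)|+\alpha^{*}m>0$, using $\alpha>\alpha^{*}$ and $m\geq 1$. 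The only remaining case, $V(J)=V$ (so $j=|V|$), is handled directly from $m\leq f_d(\Delta)$ and $\alpha>\alpha^{*}=-|V|/f_d(\Delta)$, which give $|V|+\alpha m\geq|V|+\alpha f_d(\Delta)>0$; this case contributes $O(\E[X])=o(\E[X]^{2})$.

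Since $\Delta$ is fixed there are only finitely many overlap types, so summing the finitely many relative contributions $n^{-(j+\alpha m)}$ shows $\operatorname{Var}(X)=o(\E[X]^{2})$, and Chebyshev's inequality yields $\prob[X=0]\to 0$, i.e.\ a.a.s.\ $\Delta$ appears. Together with the first-moment direction this exhibits $\alpha^{*}=-|V|/f_d(\Delta)$ as a threshold, proving both statements simultaneously. I expect the genuine difficulty to lie not in any single inequality but in setting up the overlap classification rigorously: separating the shared vertex count $j$ from the shared $d$-face count $m$, and confirming that the shared $d$-faces always sit inside an induced subcomplex $\Delta[V(J)]$ so that the defining inequality of $d$-lumplessness can be applied. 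Once that classification is in place, each individual term is bounded immediately from the hypothesis.
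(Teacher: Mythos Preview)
The paper does not supply a proof of this theorem at all: it is quoted, with citation to Babson's lecture notes, as a background result in the preliminaries, and is then used as a black box (for instance in Lemma~\ref{d-skeleton of joins of boundaries of d-simplexes} and the proof of Theorem~\ref{thm: vanishing results on d-clique complex of random simplicial complexes}). So there is nothing in the paper to compare your argument against.

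That said, your proposal is the standard and correct route. The reduction of ``$\Delta$ embeds in $\Delta_d(G_d(n,p))$'' (respectively ``$G_d(n,p)$ contains a copy of the $d$-complex $\Delta$'') to the event that a fixed injection $\phi$ carries every $d$-face of $\Delta$ into $G_d(n,p)$ is exactly right, and it correctly identifies the two assertions with the same random variable $X$. The first-moment half needs no hypothesis on $\Delta$; for the second-moment half, your overlap classification is the appropriate analogue of the strictly balanced subgraph argument, and your use of $d$-lumplessness is precisely where it belongs: bounding $m\le f_d(\Delta[V(J)])$ and then invoking $|V(J)|/f_d(\Delta[V(J)])>|V|/f_d(\Delta)$ gives $|V(J)|+\alpha m>0$ for every proper nonempty overlap once $\alpha>\alpha^*$. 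Your separate handling of the full-overlap case $j=|V|$ via $\operatorname{Var}(X)\ge$-contribution $=O(\E[X])=o(\E[X]^2)$ is also correct. The only cosmetic point is that when you write $|V|+\alpha m\ge |V|+\alpha f_d(\Delta)$ you are implicitly using $\alpha<0$; this is harmless since for $\alpha\ge 0$ the conclusion is trivial, but you might want to say so explicitly.
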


(Recall that $\alpha = a$ is called a ~\emph{threshold} for an event if it occurs a.a.s. for $\alpha >a$ and fails a.a.s. for $\alpha < a$).

\section{vanishing and nonvanishing homology}

In this section, we discuss the topology of the random $d$-clique complexes $\Delta_d(G_d(n,p))$. For comparison purposes, we keep in mind that the threshold for $G_d(n,n^{\alpha})$ to contain a copy of the $d$-skeleton of a $k$-dimensional simplex $\Delta_{k+1}^{(d)}$ is $-\frac{d+1}{\binom{k}{d}}$. More precisely, since the $d$-skeleton of a $k$-dimensional simplex is $d$-lumpless, Theorem~\ref{thm: threshold for a subcomplex containment} gives that if $p = n^{\alpha}$ with $\alpha > -\frac{d+1}{\binom{k}{d}}$ then a.a.s. $\dim(\Delta_d(G_d(n,n^{\alpha})))\geq k$, and if $\alpha < -\frac{d+1}{\binom{k}{d}}$ then a.a.s. $\dim(\Delta_d(G_d(n,n^{\alpha}))) < k$.
 
\begin{lem}\label{lem: domination number and d-clique complex}
If $p = (\frac{m \log n +\omega(n)}{n})^{\frac{1}{\binom{m}{d}}}$ and $\omega(n)\rightarrow \infty$ then a.a.s. $\tilde{\gamma}(\Delta_d(G_d(n,p))) \geq m +1$.
\end{lem}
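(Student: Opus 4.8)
The plan is a first-moment estimate on the number of strong dominating sets of size $m$. We assume $m \geq d$, so that $\binom{m}{d} > 0$. Since $\tilde{sp}_\Delta$ is monotone with respect to inclusion of its argument and $n \to \infty$ (so $m < n$), the event $\tilde\gamma(\Delta) \leq m$ occurs precisely when some set of size exactly $m$ is a strong dominating set (one pads a smaller one up to size $m$). Hence it suffices to show that a.a.s. no $m$-subset $A \subseteq [n]$ satisfies $\tilde{sp}_\Delta(A) = [n]$, where throughout $\Delta = \Delta_d(G_d(n,p))$.

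The first step, and the one that needs the most care, is a deterministic description of $\tilde{sp}_\Delta(A)$ for a fixed $m$-set $A$. Because $\Delta_d(G_d(n,p))$ contains the full $(d-1)$-skeleton on $[n]$, every $d$-subset of $A$ is already a face of $\Delta$, and a short computation from the definition of $\tilde{sp}$ then shows that for $v \notin A$,
\[
v \notin \tilde{sp}_\Delta(A) \quad\Longleftrightarrow\quad \tau \cup \{v\}\ \text{is a $d$-face of } G_d(n,p)\ \text{for every $d$-subset } \tau \subseteq A .
\]
Indeed, for a face $\sigma \subseteq A$ the set $\sigma \cup \{v\}$ lies in $\Delta_d(G_d(n,p))$ exactly when each $d$-subset of $\sigma$ together with $v$ spans a $d$-face of $G_d(n,p)$; letting $\sigma$ range over the faces of $\Delta[A]$, equivalently over all $d$-subsets of $A$ since these are all faces, yields the displayed equivalence. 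The subtle point here is that the witnessing set $\sigma$ in the definition of $\tilde{sp}$ must itself be a face of $\Delta$, a constraint which is innocuous in our setting only because of the full $(d-1)$-skeleton.

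Next fix an $m$-subset $A$. The $\binom{m}{d}$ events ``$\tau \cup \{v\} \in G_d(n,p)$'', as $\tau$ ranges over the $d$-subsets of $A$, are independent and each of probability $p$, so $\prob[\,v \notin \tilde{sp}_\Delta(A)\,] = p^{\binom{m}{d}}$ for every $v \in [n] \setminus A$; moreover, for distinct such $v$ these events depend on disjoint families of potential $d$-faces and are therefore mutually independent. Since a strong dominating $A$ forces each $v \in [n] \setminus A$ into $\tilde{sp}_\Delta(A)$, discarding the vertices $v \in A$ already yields
\[
\prob\bigl[\,\tilde{sp}_\Delta(A) = [n]\,\bigr] \;\leq\; \bigl(1 - p^{\binom{m}{d}}\bigr)^{\,n-m},
\]
and a union bound over the $\binom{n}{m}$ choices of $A$ gives $\prob\bigl[\,\tilde\gamma(\Delta) \leq m\,\bigr] \leq \binom{n}{m}\bigl(1 - p^{\binom{m}{d}}\bigr)^{n-m}$.

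Finally one substitutes the value of $p$. From $p = \bigl(\tfrac{m\log n + \omega(n)}{n}\bigr)^{1/\binom{m}{d}}$ we get $p^{\binom{m}{d}} = \tfrac{m\log n + \omega(n)}{n} \leq 1$, so $\bigl(1 - p^{\binom{m}{d}}\bigr)^{n-m} \leq \exp\!\bigl(-\bigl(1 - \tfrac{m}{n}\bigr)(m\log n + \omega(n))\bigr) \leq e^{m}\exp\!\bigl(-(m\log n + \omega(n))\bigr)$, the last inequality using $\tfrac{m}{n}(m\log n + \omega(n)) \leq m$. Since $\binom{n}{m} \leq n^m$, the union-bound estimate is at most $e^{m} e^{-\omega(n)}$, which tends to $0$ as $\omega(n) \to \infty$; hence a.a.s. $\tilde\gamma(\Delta_d(G_d(n,p))) \geq m+1$. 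The asymptotics are routine, and indeed the value of $p$ is calibrated precisely so that the expected number of strong dominating $m$-sets vanishes; apart from the deterministic equivalence of the second paragraph, the only thing to watch is that ignoring the vertices $v \in A$ still leaves a legitimate upper bound on $\prob[\tilde{sp}_\Delta(A) = [n]]$.
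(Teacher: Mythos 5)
Your proposal is correct and follows essentially the same first-moment argument as the paper: a union bound over $m$-sets, the observation that $v\notin\tilde{sp}_\Delta(A)$ is equivalent to all $\binom{m}{d}$ potential $d$-faces $\tau\cup\{v\}$ lying in $G_d(n,p)$ (giving probability $p^{\binom{m}{d}}$, independently over distinct $v$), and the exponential estimate. The only cosmetic difference is that you drop the vertices of $A$ and absorb the loss into an $e^{m}$ factor, whereas the paper keeps all $n$ vertices by noting $1-p^{\binom{m-1}{d}}\leq 1-p^{\binom{m}{d}}$.
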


\begin{proof}
Let $X$ be the number of strong dominating sets of $\Delta_d(G_d(n,p))$ with cardinality $m$. For any fixed $m$-subset $D$ of $[n]$, a vertex $v\in [n]$ is contained by the set $\tilde{sp}_{\Delta_d(G_d(n,p))}(D)$ if and only if there exists some $(d-1)$-dimensional face $\sigma\subseteq D$ of $\Delta_d(G_d(n,p))$ such that $\sigma\cup \{v\}$ is a minimal non-face of $\Delta_d(G_d(n,p))$. It thus follows that the probability that $v$ is contained by $\tilde{sp}_{\Delta_d(G_d(n,p))}(D)$ is $1 - p^{\binom{m-1}{d}}$ or $1 - p^{\binom{m}{d}}$ according to the condition that $v$ is contained by $D$ or not. Hence, the probability that $D$ is a strong dominating set of $\Delta_d(G_d(n,p))$ is at most $(1 - p^{\binom{m}{d}})^n$.  Therefore, for the expectation $\E(X)$ of $X$, we have \begin{align*} \E(X) &\leq \binom{n}{m}(1-p^{\binom{m}{d}})^n\\
&\leq n^{m}e^{-p^{\binom {m}{d}}n}\\
&= n^{m}e^{-m\log n -\omega(n)}\\
&= e^{-\omega(n)} = o(1),
\end{align*}
since $\omega(n)\rightarrow \infty$. Thus, $X=0$ a.a.s. and so a.a.s. $\tilde{\gamma}(\Delta_d(G_d(n,p))) \geq m +1$.
\end{proof}

\begin{thm}\label{thm: connectivity result for d-clique complex}
If $p = (\frac{(2k + 2) \log n +\omega(n)}{n})^{\frac{1}{\binom{2k+2}{d}}}$ and $\omega(n)\rightarrow \infty$ then a.a.s. the simplicial complex $\Delta_d(G_d(n,p))$ is $k$-connected.
\end{thm}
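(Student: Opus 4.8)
The plan is to derive this directly from Lemma~\ref{lem: domination number and d-clique complex} together with the Aharoni--Berger estimate of Theorem~\ref{thm: homotopic connectivity bound-aharoni}. First I would instantiate Lemma~\ref{lem: domination number and d-clique complex} with $m = 2k+2$: for this value of $m$ the standing hypothesis $p = (\frac{(2k+2)\log n + \omega(n)}{n})^{1/\binom{2k+2}{d}}$ is literally the hypothesis of that lemma, and $\omega(n)\to\infty$ is assumed, so it yields that a.a.s.\ $\tilde{\gamma}(\Delta_d(G_d(n,p))) \geq 2k+3$.

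Next I would apply Theorem~\ref{thm: homotopic connectivity bound-aharoni} to $\Delta = \Delta_d(G_d(n,p))$, which on the a.a.s.\ event just obtained gives
\[
\conn(\Delta_d(G_d(n,p))) \;\geq\; \frac{\tilde{\gamma}(\Delta_d(G_d(n,p)))}{2} - 2 \;\geq\; \frac{2k+3}{2} - 2 \;=\; k - \tfrac12 .
\]
Since $\Delta_d(G_d(n,p))$ contains the full $(d-1)$-skeleton of the simplex on $[n]$ and is in particular nonempty, its connectivity lies in $\Z\cup\{+\infty\}$, so the bound $\conn \geq k - \tfrac12$ upgrades to $\conn(\Delta_d(G_d(n,p))) \geq k$. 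Thus a.a.s.\ $\Delta_d(G_d(n,p))$ is $k$-connected, as claimed.

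Since all of the analytic content is carried by the two cited results, the only genuinely delicate point is the choice of the constant $2k+2$: it is calibrated precisely so that $\tilde{\gamma}/2 - 2$ clears the integer $k$ rather than stalling at $k-1$. I do not expect a real obstacle here beyond checking that the exponent $1/\binom{2k+2}{d}$ and the linear term $(2k+2)\log n$ appearing in the hypothesis match the $m = 2k+2$ instance of Lemma~\ref{lem: domination number and d-clique complex} verbatim, and that the growth condition on $\omega(n)$ is inherited unchanged.
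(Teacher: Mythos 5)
Your argument is exactly the paper's proof: instantiate Lemma~\ref{lem: domination number and d-clique complex} with $m = 2k+2$ to get $\tilde{\gamma}(\Delta_d(G_d(n,p))) \geq 2k+3$ a.a.s., then apply Theorem~\ref{thm: homotopic connectivity bound-aharoni} to obtain $\conn \geq k - \tfrac12$, hence $\conn \geq k$ by integrality. The paper states this in one line and leaves the integrality upgrade implicit; your write-up makes it explicit, which is a small improvement but not a different route.
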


\begin{proof}Lemma~\ref{lem: domination number and d-clique complex} taken together with Lemma~\ref{thm: homotopic connectivity bound-aharoni} gives $\conn(\Delta_d(G_d(n,p))) \geq k-\frac{1}{2}$.
\end{proof}

\begin{rem}
We note that Theorem~\ref{thm: connectivity result for d-clique complex} reduces to Corollary 3.3 in~\cite{Kahle} when $d=1$.
\end{rem}

\begin{lem}\label{lem: number of d-1 faces in nontrivial cycle}
If $\gamma$ is a nontrivial $k$-cycle in the $d$-clique complex $\Delta_d (\Gamma)$ of a simplicial complex $\Gamma$, then $f_{d-1}(\Delta_d (\Gamma)(\supp(\gamma))) \geq (d+1)(k-d+1) + d +1$ holds.
\end{lem}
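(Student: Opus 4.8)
The plan is to induct on $k$ with $d$ fixed, using the vertex-link operation. Write $\Delta:=\Delta_d(\Gamma)$; we may assume $\gamma$ has minimal support and set $\Sigma:=\Delta(\supp(\gamma))$. First I would record the structural features of $\Sigma$ the argument needs. It is pure $k$-dimensional and strongly connected (as recalled before the statement), and since $\partial\gamma=0$, every codimension-one face of $\Sigma$ lies in at least two facets of $\Sigma$. Most importantly for the recursion, for any vertex $v$ the link $\lk_\Delta(v)$ is again a $d$-clique complex: a complex is a $d$-clique complex precisely when it has no minimal non-face of dimension exceeding $d$, and this property passes to vertex links, because if $W\not\ni v$ has all proper subsets in $\lk_\Delta(v)$ then every $(d+1)$-subset of $W\cup\{v\}$ lies in $\Gamma$, whence $W\cup\{v\}\in\Delta$ and $W\in\lk_\Delta(v)$. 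Finally one has $\lk_\Delta(v)\bigl(\supp(\gamma\cap\lk_\Delta(v))\bigr)=\lk_\Sigma(v)$, both being the downward closure of $\{\sigma\setminus v:\sigma\in\supp(\gamma),\ v\in\sigma\}$.

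\emph{Base cases.} For $k\le d-2$, $\Delta$ has full $k$-skeleton and $\tilde{H}_k(\Delta)=0$, so there is nothing to prove. For $k=d-1$ the claim is $f_{d-1}(\Sigma)=|\supp(\gamma)|\ge d+1$: fix a facet $\sigma$ of $\Sigma$; each of its $d$ faces of dimension $d-2$ lies in a second facet, two distinct facets share at most one such face, so $\sigma$ together with these second facets already number at least $d+1$.

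\emph{Inductive step $(k\ge d)$.} Fix $v\in\vsupp(\gamma)$. By Lemma~\ref{lem: nontrivial cycles on strongly connected simplicial complexes}, $\gamma_v:=\gamma\cap\lk_\Delta(v)$ is a nontrivial $(k-1)$-cycle in the $d$-clique complex $\lk_\Delta(v)$, so the inductive hypothesis gives $f_{d-1}(\lk_\Sigma(v))\ge(d+1)(k-d+1)$. Now classify the $(d-1)$-dimensional faces $\tau$ of $\Sigma$: those with $v\in\tau$ number $f_{d-2}(\lk_\Sigma(v))$; those with $v\notin\tau$ and $\tau\cup\{v\}\in\Sigma$ number $f_{d-1}(\lk_\Sigma(v))$; the remaining ones number some $c\ge 0$. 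Summing,
\[
f_{d-1}(\Sigma)=f_{d-1}(\lk_\Sigma(v))+f_{d-2}(\lk_\Sigma(v))+c\ \ge\ (d+1)(k-d+1)+f_{d-2}(\lk_\Sigma(v))+c ,
\]
so it suffices to prove $f_{d-2}(\lk_\Sigma(v))+c\ge d+1$. If $d\ge 2$ this holds with $c$ dropped: $\lk_\Sigma(v)$ is a pure $(k-1)$-dimensional complex with at least two facets, each of size $k$, and two distinct $k$-element sets contain between them at least $\binom{k}{d-1}+1\ge d+1$ sets of size $d-1$, all of which are faces of $\lk_\Sigma(v)$. If $d=1$, one has $f_{-1}(\lk_\Sigma(v))=1$, and $c\ge 1$: were $c=0$, the vertex $v$ would be joined by an edge of $\Sigma$ to every other vertex of $\Sigma$, so $\gamma$ could be coned off at $v$ and would be a boundary, contradicting nontriviality. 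Either way $f_{d-1}(\Sigma)\ge(d+1)(k-d+1)+(d+1)=(d+1)(k-d+2)$, completing the induction.

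The step needing the most care is extracting the extra summand $d+1$. The inductive hypothesis only controls $f_{d-1}$ of the link's support complex, that is, a count of certain $d$-dimensional faces of $\Sigma$ through $v$, whereas the target counts $(d-1)$-dimensional faces; the classification of faces by their position relative to $v$ is what bridges this gap, but then the residual inequality $f_{d-2}(\lk_\Sigma(v))+c\ge d+1$ is exactly where one is forced to treat $d=1$ (via a cone/coboundary argument) separately from $d\ge 2$ (via a purely local count). A secondary point to handle carefully is that Lemma~\ref{lem: nontrivial cycles on strongly connected simplicial complexes} yields only nontriviality, not minimality, of $\gamma_v$ in $\lk_\Delta(v)$, so one must either check that the inductive statement applies to $\gamma_v$ as it stands or pass to a minimal representative while controlling how its support complex compares with $\lk_\Sigma(v)$.
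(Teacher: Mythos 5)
Your proof is correct and follows the same strategy as the paper's: induction on $k$ via vertex links and Lemma~\ref{lem: nontrivial cycles on strongly connected simplicial complexes}, the same base case $k=d-1$ (argued identically), and the same bookkeeping of $(d-1)$-faces of the support complex according to their position relative to a chosen vertex $v$. The one substantive difference is the endgame. The paper bounds the number of $(d-1)$-faces through $v$ by $\binom{k}{d-1}\ge d$ using a single $k$-facet of the support complex; this leaves a deficit of $1$ against the target and forces a contradiction argument with a separate analysis of the equality cases $d=1$ (delegated to Kahle) and $k=d$. You instead extract $f_{d-2}(\lk_{\Sigma}(v))\ge\binom{k}{d-1}+1\ge d+1$ from \emph{two} facets of the link when $d\ge 2$, which closes the induction directly and eliminates the $k=d$ case, and you spell out the $d=1$ coning/star argument explicitly. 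Your added care about why the induction hypothesis applies to links, and about the minimality hypothesis in Lemma~\ref{lem: nontrivial cycles on strongly connected simplicial complexes} versus the mere nontriviality of $\gamma\cap\lk(v)$, addresses points the paper glosses over (the latter issue is present in the paper's proof as well). One small correction: the link of a $d$-clique complex need not contain the full $(d-1)$-skeleton on its vertex set, so it is not literally of the form $\Delta_d(\Gamma')$; the invariant that actually passes to links, and the only one your argument uses, is that every minimal non-face has at most $d+1$ vertices, so the induction should be run over that slightly larger class.
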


\begin{proof} Let $\gamma$ be a nontrivial $k$-cycle in $\Delta_d (\Gamma)$ with minimal vertex support. The assertion is true in the case $k = d-1$. Indeed, if $\sigma$ is a $(d-1)$-dimensional face in the support of $\gamma$, then each $(d-2)$-dimensional face of $\sigma$ must be contained by a distinct $(d-1)$-dimensional face different from $\sigma$ in the support of $\gamma$, since otherwise the coefficient of $\sigma$ would be $0$ in $\gamma$.

We assume now that $k \geq d$ and apply induction. Suppose to the contrary that $f_{d-1}(\Delta_d (\Gamma)(\supp(\gamma))) \leq (d+1)(k-d+1) + d$ holds. If $v\in \vsupp(\gamma)$ then $\gamma\cap \lk(v)$ is a nontrivial $(k-1)$-cycle in $\lk(v)$ by Lemma~\ref{lem: nontrivial cycles on strongly connected simplicial complexes}. It follows by induction hypothesis that $f_{d-1}(\Delta_d (\lk(v))(\supp(\gamma\cap \lk(v)))) \geq (d+1)(k-d) + d +1$. We note that the number of $(d-1)$-dimensional faces in $\Delta_d (\Gamma)(\supp(\gamma))$ belonging to $\st(v)$ but not to $\lk(v)$ is at least $\binom{k}{d-1}$. Note then that we must have $d = 1$ or $k = d$ so that $f_{d-1}(\Delta_d (\lk(v))(\supp(\gamma\cap \lk(v)))) = (d+1)(k-d) + d + 1$ and $f_{d-1}(\Delta_d (\Gamma)(\supp(\gamma))) = (d+1)(k-d) + 2d + 1$ hold. If $d = 1$, then $\Delta_d (\Gamma)(\supp(\gamma))$ is a $2k$-dimensional simplex, a contradiction (see~\cite{Kahle} for details). If $d = k$, then  $f_{d-1}(\Delta_d (\lk(v))(\supp(\gamma\cap \lk(v)))) = d + 1$ and $f_{d-1}(\Delta_d (\Gamma)(\supp(\gamma))) = 2d + 1$ hold, which is impossible. This completes the proof.

\end{proof}

\begin{rem}
We note that, in the case of a flag simplicial complex $\Delta$, Lemma~\ref{lem: number of d-1 faces in nontrivial cycle} reduces to the well-known fact that any representative of a class in $\tilde{H}_{k}(\Delta;\Z)$ is supported on at least $2k + 2$ vertices. See Lemma 5.3 in~\cite{Kahle}.
\end{rem}

\begin{lem}\label{lem: homology is generated by cycles on small vertex sets}
If $\alpha < \frac{-1}{\binom{k}{d}}$ and $0 < \frac{k}{\binom{k}{d}N} < \frac{-1}{\binom{k}{d}}- \alpha$, then the vertex support of any strongly connected $k$-dimensional subcomplex of the $d$-clique complex $\Delta_d (G_d(n,p))$ of $G_d(n,p)$ a.a.s. has at most $N + k$ vertices, where $p = n^{\alpha}$.
\end{lem}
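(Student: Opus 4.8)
The plan is a first-moment (union bound) argument, parallel to the $d=1$ case in~\cite{Kahle}. The key reduction is this: if $\Delta$ is a strongly connected $k$-dimensional subcomplex of $\Delta_d(G_d(n,p))$ on at least $N+k+1$ vertices, then, since $\Delta$ is pure, its dual graph (facets as nodes, an edge whenever two facets meet in a codimension-one face) is connected; choosing a spanning tree orders the facets $\sigma_0,\sigma_1,\dots$ so that each $\sigma_i$ with $i\ge 1$ shares $k$ vertices with an earlier facet, whence each new facet contributes at most one new vertex. Stopping at the first step at which exactly $N+k+1$ distinct vertices have appeared and recording only the vertex-introducing steps yields an ordering $v_1,\dots,v_{N+k+1}$ of a set $S\subseteq[n]$ together with a $k$-face $\{v_1,\dots,v_{k+1}\}\in\Delta$ and, for each $j$ with $k+2\le j\le N+k+1$, a $k$-face $\{v_j\}\cup T_j\in\Delta$ with $T_j\subseteq\{v_1,\dots,v_{j-1}\}$ and $|T_j|=k$. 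Call such a labelled configuration a tree-structure; every strongly connected $k$-dimensional subcomplex on more than $N+k$ vertices contains one, so it suffices to prove that a.a.s.\ $\Delta_d(G_d(n,p))$ contains no tree-structure.

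Next I would bound the expected number of tree-structures present. A tree-structure is determined by the tuple $(v_1,\dots,v_{k+1})$ and, for each $j>k+1$, by $v_j$ and $T_j$; since $|\{v_1,\dots,v_{j-1}\}|\le N+k$, each $T_j$ admits at most $\binom{N+k}{k}$ choices, so the number of candidate tree-structures in $[n]$ is at most $C\,n^{N+k+1}$ for a constant $C=C(N,k)$. A fixed tree-structure is realized in $\Delta_d(G_d(n,p))$ only if all $(d+1)$-subsets of $\{v_1,\dots,v_{k+1}\}$ and, for each $j>k+1$, all $(d+1)$-subsets of $\{v_j\}\cup T_j$ that contain $v_j$, belong to $G_d(n,p)$; because $v_j$ is new at step $j$, these $d$-faces are pairwise distinct across steps, so at least $\binom{k+1}{d+1}+N\binom{k}{d}$ independent probability-$p$ events must all occur. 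Hence the expected number of realized tree-structures is at most $C\,n^{N+k+1}p^{\binom{k+1}{d+1}+N\binom{k}{d}}=C\,n^{E}$, where $E:=N\bigl(1+\alpha\binom{k}{d}\bigr)+\bigl(k+1+\alpha\binom{k+1}{d+1}\bigr)$.

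It then remains to check $E<0$. Rewriting the hypothesis $0<\frac{k}{\binom{k}{d}N}<\frac{-1}{\binom{k}{d}}-\alpha$ (multiply through by $\binom{k}{d}N>0$) gives $N\bigl(1+\alpha\binom{k}{d}\bigr)<-k$, and since $\binom{k}{d}\le\binom{k+1}{d+1}$ by Pascal's rule, the assumption $\alpha<\frac{-1}{\binom{k}{d}}$ yields $\alpha\binom{k+1}{d+1}<-1$, i.e.\ $k+1+\alpha\binom{k+1}{d+1}<k$; adding the two estimates gives $E<0$. Then $C\,n^{E}\to 0$, so Markov's inequality shows that a.a.s.\ no tree-structure, and hence no strongly connected $k$-dimensional subcomplex on more than $N+k$ vertices, is present. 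I expect the only delicate part to be the first step: producing the tree-structure cleanly from strong connectivity via the spanning tree of the dual graph, and verifying that the $\binom{k}{d}$ many $d$-faces newly demanded at each vertex-introducing step are genuinely new, which is exactly what forces the exponent of $p$ to grow linearly in $N$; the remaining steps are routine bookkeeping.
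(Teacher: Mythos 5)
Your proposal is correct and follows essentially the same first-moment argument as the paper: the same vertex ordering extracted from strong connectivity, the same lower bound $\binom{k+1}{d+1}+N\binom{k}{d}$ on the number of distinct $d$-faces forced into $G_d(n,p)$, and the same union bound over $O(n^{N+k+1})$ labelled configurations. The only cosmetic difference is in verifying that the exponent is negative: you add the two inequalities $N\bigl(1+\alpha\binom{k}{d}\bigr)<-k$ and $k+1+\alpha\binom{k+1}{d+1}<k$ directly, whereas the paper threads an auxiliary $\epsilon$ with $\frac{k}{\binom{k}{d}N}<\epsilon<\frac{-1}{\binom{k}{d}}-\alpha$ through a chain of estimates.
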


\begin{proof}
Let $\Delta$ be a strongly connected $k$-dimensional subcomplex of $\Delta_d (G_d(n,p))$. Let the vertices of $\Delta$ are ordered as $v_1,v_2,\dots,v_n$ so that the first $k + 1$ vertices $v_1,v_2,\dots,v_{k + 1}$ forms a $k$-face and for any other vertex $v_i$ there is at least $k$ vertices $v_j$ such that $\{v_i,v_j\}\in\Delta$ where $j < i$ (See \cite{Kahle} for more details). With this ordering, suppose to the contrary that $\Delta$ has $N + k + 1$ vertices (Here $k + 1$ is the number of vertices in a $k$-dimensional face and $N$ is the number of vertices get added in total). It then follows that the number of $d$-dimensional faces in $\Delta$ is at least $\binom{k + 1}{d + 1} + \binom{k}{d}N$. Since the $d$-skeleton of any subcomplex of $\Delta_d (G_d(n,p))$ is also a subcomplex of $G_d(n,p)$, we have 
\begin{align*} \prob (C_{\Delta}) &\leq (N + k + 1)! \binom{n}{N + k + 1} p^{\binom{k+1}{d+1} + N\binom{k}{d}} \\
&=  (N + k + 1)! \binom{n}{N + k + 1} n^{\alpha(\binom{k+1}{d+1} + N\binom{k}{d})}
\end{align*}
for the total probability, where $C_{\Delta}$ denotes the event that $\Delta_d(G_d(n,p))$ contains a simplicial complex isomorphic to $\Delta$. By the assumption $0 < \frac{k}{\binom{k}{d}N} < \frac{-1}{\binom{k}{d}}- \alpha$, we can choose $N$ and $\epsilon$ such that $\frac{k}{\binom{k}{d}N} < \epsilon < \frac{-1}{\binom{k}{d}}- \alpha$ holds. It then follows that $p = n^\alpha < n^{-(\frac{1}{\binom{k}{d}} + \epsilon )}$ and $k < \binom{k}{d}N\epsilon$. We therefore get that 
\begin{align*} \prob (C_{\Delta}) &\leq (N + k + 1)! \binom{n}{N + k + 1} n^{\alpha(\binom{k+1}{d+1} + N\binom{k}{d})} \\
&<  (N + k + 1)! \binom{n}{N + k + 1} n^{{-(\frac{1}{\binom{k}{d}} + \epsilon )}(\binom{k+1}{d+1} + N\binom{k}{d})} \\
& \leq n^{N + k + 1}  n^{{-\frac{1}{\binom{k}{d}}}(\binom{k+1}{d+1} + N\binom{k}{d})} n^{-\epsilon (\binom{k+1}{d+1} + N\binom{k}{d})} \\
& =  n^{N + k + 1}  n^{-\frac{1}{\binom{k}{d}} \binom{k+1}{d+1} - N} n^{-\epsilon \binom{k+1}{d+1}  -\epsilon N \binom{k}{d}} \\ 
& =  n^{k + 1}  n^{-\frac{k+1}{d+1}} n^{-\epsilon \binom{k+1}{d+1}  -\epsilon N \binom{k}{d}} \\
& < n^{1-\frac{k+1}{d+1}-\epsilon \binom{k+1}{d+1}}\\
& \leq  n^{-\epsilon}\\
& = O(n^{-\epsilon}) = o(1),
\end{align*}
since $k\geq d$. This, taken together with the facts that the number of non-isomorphic strongly connected $k$-dimensional simplicial complexes on $N + k + 1$ vertices is finite and any strongly connected $k$-dimensional simplicial complex on more than $N + k + 1$ vertices contains a strongly connected $k$-dimensional simplicial complex on $N + k + 1$, implies that asymptotically almost surely the vertex support of every strongly connected $k$-dimensional subcomplex of the $d$-clique complex $\Delta_d (G_d(n,p))$ of $G_d(n,p)$ has at most $N + k$ vertices. This completes the proof.
\end{proof}

\begin{thm}\label{thm: a vanishing homology result}
If $p = n ^{\alpha}$ with $\alpha < \frac{-1}{k-d +1}$ then a.a.s. $\tilde{H}_k(\Delta_d(G_d(n,p)), \Z) = 0$ holds.
\end{thm}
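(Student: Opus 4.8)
The plan is to show that a nontrivial class in $\tilde H_k(\Delta_d(G_d(n,p)),\Z)$ forces $\Delta_d(G_d(n,p))$ to contain a ``$d$-face heavy'' subcomplex whose expected number of copies vanishes. So suppose $\tilde H_k(\Delta_d(G_d(n,p)),\Z)\neq 0$ and fix a nontrivial $k$-cycle $\gamma$ of minimal vertex support (and, among those, minimal support). Then the support complex $\Sigma:=\Delta_d(G_d(n,p))(\supp(\gamma))$ is a strongly connected $k$-dimensional subcomplex, and by Lemma~\ref{lem: nontrivial cycles on strongly connected simplicial complexes} the chain $\gamma\cap\lk(v)$ is a nontrivial $(k-1)$-cycle for every $v\in\vsupp(\gamma)$.

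The first step is a local-to-global count of $d$-faces. For $v\in\vsupp(\gamma)$, the link $\lk_{\Delta_d(G_d(n,p))}(v)$ is again a $d$-clique complex, so Lemma~\ref{lem: number of d-1 faces in nontrivial cycle}, applied to the nontrivial $(k-1)$-cycle $\gamma\cap\lk(v)$, shows that its support complex has at least $(d+1)(k-d)+d+1=(d+1)(k-d+1)$ faces of dimension $d-1$. Since the support complex of $\gamma\cap\lk(v)$ sits inside $\lk_\Sigma(v)$, and each of its $(d-1)$-faces together with $v$ is a $d$-face of $\Sigma$ through $v$, the number of $d$-faces of $\Sigma$ containing $v$ is at least $(d+1)(k-d+1)$. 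Summing over $v\in\vsupp(\gamma)$ and dividing by $d+1$ gives
\[
f_d(\Sigma)\ \geq\ (k-d+1)\,|\vsupp(\gamma)| .
\]
I would stress that this bound carries \emph{no} additive error term; that is precisely what makes $-\tfrac1{k-d+1}$ the right exponent, and it is the place where ``fillable'' complexes (e.g.\ the boundary of a simplex, for which this inequality fails) get excluded.

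Next I would cap the number of vertices. Since $k-d+1\leq\binom kd$, the hypothesis $\alpha<-\tfrac1{k-d+1}$ implies $\alpha<-\tfrac1{\binom kd}$, so $-\tfrac1{\binom kd}-\alpha>0$ and one may choose $N$ with $0<\tfrac{k}{\binom kd N}<-\tfrac1{\binom kd}-\alpha$; then Lemma~\ref{lem: homology is generated by cycles on small vertex sets} gives that a.a.s.\ every strongly connected $k$-dimensional subcomplex of $\Delta_d(G_d(n,p))$---in particular $\Sigma$---has at most $N+k$ vertices. Hence, a.a.s., $\Sigma$ has $v$ vertices with $k+2\leq v\leq N+k$ and at least $(k-d+1)v$ faces of dimension $d$, and there are only finitely many isomorphism types of such complexes. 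Finally, a first-moment estimate closes the argument: a $d$-face of a subcomplex of $\Delta_d(G_d(n,p))$ is a $d$-face of $G_d(n,p)$, appearing independently with probability $n^{\alpha}$, so for a fixed type $T$ on $v$ vertices with $f_d(T)\geq(k-d+1)v$ the expected number of copies is at most $n^{v}\,n^{\alpha f_d(T)}\leq n^{v(1+\alpha(k-d+1))}$, an exponent that is negative since $\alpha<-\tfrac1{k-d+1}$; as $v\geq k+2$ this is $O(n^{-\eta})$ for some fixed $\eta>0$. Summing over the finitely many types and intersecting with the event from Lemma~\ref{lem: homology is generated by cycles on small vertex sets} contradicts the existence of $\gamma$, so a.a.s.\ $\tilde H_k(\Delta_d(G_d(n,p)),\Z)=0$.

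I expect the main obstacle to be the first step: one must check carefully that the link of a vertex in a $d$-clique complex is itself a $d$-clique complex (so Lemma~\ref{lem: number of d-1 faces in nontrivial cycle} can be invoked in the link) and that $\Delta_d(G_d(n,p))(\supp(\gamma\cap\lk(v)))\subseteq\lk_\Sigma(v)$, so that counting its $(d-1)$-faces legitimately counts $d$-faces of $\Sigma$ incident to $v$. The capping step is essentially a citation of Lemma~\ref{lem: homology is generated by cycles on small vertex sets} once the elementary inequality $k-d+1\leq\binom kd$ is noted, and the first-moment computation is routine.
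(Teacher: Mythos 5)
Your proposal follows the paper's own argument essentially step for step: minimal-vertex-support cycle, Lemma~\ref{lem: nontrivial cycles on strongly connected simplicial complexes} plus Lemma~\ref{lem: number of d-1 faces in nontrivial cycle} applied in each vertex link, the double-count giving $f_d \geq (k-d+1)\cdot(\text{number of vertices})$, the cap on the vertex support via $k-d+1\leq\binom{k}{d}$ and Lemma~\ref{lem: homology is generated by cycles on small vertex sets}, and the same first-moment bound $n^{m(1+\alpha(k-d+1))}=o(1)$. The approach and all key estimates coincide with the paper's proof, so there is nothing further to compare.
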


\begin{proof}
Let $\gamma$ be a nontrivial $k$-cycle in $\Delta_d(G_d(n,p))$ with minimal vertex support. Then $\gamma\cap \lk(v)$ is a nontrivial $(k-1)$-cycle in $\lk(v)$ for any $v\in \vsupp(\gamma)$ by Lemma~\ref{lem: nontrivial cycles on strongly connected simplicial complexes}. Therefore, we have that $f_{d-1}(\Delta_d (\lk(v))(\supp(\gamma\cap \lk(v)))) \geq (d+1)(k-d) + d +1$ for any $v\in \vsupp(\gamma)$.

Consider an arbitrary simplicial complex $\Delta$ on $m$ vertices in which the number of $(d-1)$-dimensional faces in $\lk(v)$ for any vertex $v\in V(\Delta)$ is at least $(d+1)(k-d) + d +1$. Note then that the number of $d$-dimensional faces $f_d(\Delta)$ of $\Delta$ is at least 
\begin{align*} \frac{m((d+1)(k-d +1))}{\binom{d + 1}{d}} = m(k-d +1).
\end{align*}
It then follows that the probability that $\Delta$ is a subcomplex of $\Delta_d(G_d(n,p))$ is at most 
\begin{align*} m!\binom{n}{m}p^{m(k-d +1)} &\leq n^m n^{\alpha m(k-d +1)}\\
& = n^{m ( 1 + \alpha (k-d +1))}\\
& = o(1), 
\end{align*}
since $\alpha (k-d +1) < -1$. Note that $\frac{-1}{k-d +1}\leq \frac{-1}{\binom{k}{d}}$ whenever $d\leq k$. We therefore have a.a.s. no $k$-dimensional cycles on more than $N + k + 1$ vertices by Lemma~\ref{lem: homology is generated by cycles on small vertex sets}. We also note that the number of non-isomorphic simplicial complexes $\Delta$ on $N + k$ vertices in which the number of $(d-1)$-dimensional faces in $\lk(v)$ for any vertex $v\in V(\Delta)$ is at least $(d+1)(k - d + 1)$ is finite.  It thus follows that there are asymptotically almost surely no vertex minimal nontrivial $k$-dimensional cycles in the $d$-clique complex $\Delta_d(G_d(n,p))$ and so a.a.s. $\tilde{H}_k(\Delta_d(G_d(n,p)), \Z) = 0$ holds.
\end{proof}

\begin{rem}Lemma~\ref{lem: number of d-1 faces in nontrivial cycle}, Lemma~\ref{lem: homology is generated by cycles on small vertex sets}, Theorem~\ref{thm: a vanishing homology result} generalize Lemma 5.3, Lemma 5.1, Theorem 3.6 in \cite{Kahle}, respectively.
\end{rem}

Lemma~\ref{d-skeleton of joins of boundaries of d-simplexes} provides us with an example of a lumpless simplicial complex.

\begin{lem}\label{d-skeleton of joins of boundaries of d-simplexes} For $d,k\geq 1$, the $d$-skeleton of the $(k+1)$-fold join $K:= *_k\partial(\Delta_{d+1})$ of boundaries of $d$-dimensional simplexes is $d$-lumpless, i.e. $\frac{f_0(K^{(d)}[S])}{f_d(K^{(d)}[S])}> \frac{f_0(K^{(d)})}{f_d(K^{(d)})}$ for every subset $\emptyset\neq S\subset V(K)$.
\end{lem}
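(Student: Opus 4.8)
The plan is to reduce the lemma to a one‑variable monotonicity statement about binomial coefficients. Write the vertex set of $K$ as a disjoint union $V(K)=V_0\sqcup V_1\sqcup\cdots\sqcup V_k$, where $V_i$ is the vertex set of the $i$-th copy of $\partial(\Delta_{d+1})$, so that $|V_i|=d+1$ and $n_0:=f_0(K^{(d)})=(k+1)(d+1)$. The first step is the observation that a $(d+1)$-element set $F\subseteq V(K)$ is a $d$-face of $K^{(d)}$ if and only if $F$ is not contained in a single block $V_i$: a subset of $V(K)$ is a face of the join $K$ exactly when its intersection with each $V_i$ is a face of $\partial(\Delta_{d+1})$, i.e.\ has at most $d$ elements, and for $|F|=d+1$ this can fail only when $F=V_i$ for some $i$. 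Since each block has precisely $d+1$ vertices, there is exactly one such ``forbidden'' $(d+1)$-set per block, so $f_d(K^{(d)})=\binom{n_0}{d+1}-(k+1)$, which is positive because $\binom{n_0}{d+1}\ge\binom{n_0}{2}>k+1$ for $d,k\ge1$. The same count on an induced subcomplex gives
\[
f_d\bigl(K^{(d)}[S]\bigr)=\binom{s}{d+1}-m(S),\qquad m(S):=\#\{\,i:|S\cap V_i|=d+1\,\},
\]
for every $S\subseteq V(K)$ with $s:=|S|$.

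Next I would collapse the dependence on $S$ to the single parameter $s$. If $s\le d$ then $f_d(K^{(d)}[S])=0$ and the desired strict inequality is immediate, so assume $s\ge d+1$. Put $g(s,m):=\dfrac{s}{\binom{s}{d+1}-m}$; the quantity to be bounded below is $g\bigl(s,m(S)\bigr)$, and $g(s,\,\cdot\,)$ is strictly increasing in its second argument. From $s=\sum_i|S\cap V_i|\le m(S)(d+1)+\bigl(k+1-m(S)\bigr)d$ we get $m(S)\ge m_{\min}(s):=\max\{0,\,s-(k+1)d\}$, so it suffices to prove $g\bigl(s,m_{\min}(s)\bigr)>\rho_0$ for all $d+1\le s\le n_0-1$, where $\rho_0:=\dfrac{f_0(K^{(d)})}{f_d(K^{(d)})}=\dfrac{(k+1)(d+1)}{\binom{(k+1)(d+1)}{d+1}-(k+1)}$. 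Writing $h(s):=\dfrac{1}{g(s,m_{\min}(s))}=\dfrac{1}{s}\Bigl(\binom{s}{d+1}-m_{\min}(s)\Bigr)$, and using that $m_{\min}(n_0)=(k+1)(d+1)-(k+1)d=k+1$ gives $h(n_0)=1/\rho_0$, the claim becomes: $h$ is strictly increasing on the integers $d+1,d+2,\dots,n_0$.

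This last monotonicity is the final step. On $d+1\le s\le(k+1)d$ one has $m_{\min}(s)=0$, so $h(s)=\frac{1}{d+1}\binom{s-1}{d}$ is strictly increasing. On $(k+1)d\le s\le n_0$ one has $m_{\min}(s)=s-(k+1)d\ge0$, so $h(s)=\frac{1}{d+1}\binom{s-1}{d}-1+\frac{(k+1)d}{s}$, and Pascal's rule gives
\[
h(s+1)-h(s)=\frac{1}{d+1}\binom{s-1}{d-1}-\frac{(k+1)d}{s(s+1)} .
\]
Here $\binom{s-1}{d-1}\ge1$, while $s\ge(k+1)d\ge 2d\ge d+1$ together with $s+1>d+1$ forces $s(s+1)>(k+1)d\,(d+1)$ and hence $\frac{(k+1)d}{s(s+1)}<\frac{1}{d+1}$, so that $h(s+1)-h(s)\ge\frac{1}{d+1}-\frac{(k+1)d}{s(s+1)}>0$. (The two ranges overlap consistently at $s=(k+1)d$, so no separate treatment of the ``seam'' is needed.) Therefore $h$ is strictly increasing on $\{d+1,\dots,n_0\}$, whence $h(s)<h(n_0)=1/\rho_0$ — equivalently $g(s,m_{\min}(s))>\rho_0$ — for every $s<n_0$, which is the assertion. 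The only point that needs real care is keeping the estimates in this last step strict uniformly over all $d,k\ge1$, in particular across the transition at $s=(k+1)d$ and in the degenerate‑looking case $d=1$ (where $\binom{s-1}{d-1}=1$); the remainder is routine counting.
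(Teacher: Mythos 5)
Your proof is correct, and its skeleton coincides with the paper's: both arguments rest on the count $f_d(K^{(d)}[S])=\binom{s}{d+1}-m(S)$ with $m(S)$ the number of blocks $V_i$ contained in $S$, bound $m(S)\ge m_{\min}(s)=\max\{0,\,s-(k+1)d\}$ (in the paper this is the implicit step behind the first displayed inequality, with $k+1-n$ playing the role of $m_{\min}$), and thereby reduce the lemma to the single-variable inequality $\frac{s}{\binom{s}{d+1}-m_{\min}(s)}>\rho_0$. Where you genuinely diverge is in proving that inequality. The paper expands $\binom{s}{d+1}-m_{\min}(s)$ as a product of $d$ consecutive integers and compares it factor by factor with the corresponding product at $s=n_0$, which forces a case split on $n<k+1$ versus $n\ge k+1$ and separate treatment of $d=1$ and $d\ge 2$; your route instead shows that $h(s)=\frac{1}{s}\bigl(\binom{s}{d+1}-m_{\min}(s)\bigr)$ is strictly increasing on $\{d+1,\dots,n_0\}$ via Pascal's rule and the single uniform estimate $\frac{1}{d+1}\binom{s-1}{d-1}>\frac{(k+1)d}{s(s+1)}$. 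This is cleaner, handles $d=1$ and $d\ge2$ in one stroke, and identifies the extremal configuration transparently as $s=n_0$. One degenerate case deserves an explicit sentence: when $S$ is exactly one block $V_i$ (so $s=d+1$, $m(S)=1$) you have $f_d(K^{(d)}[S])=0$, so $g(s,m(S))$ is undefined and the monotonicity-in-$m$ comparison does not literally apply; the lumplessness inequality is then vacuous for the same reason as in your $s\le d$ case, and for every other $S$ with $s\ge d+1$ one checks $\binom{s}{d+1}>m(S)$ (e.g.\ $\binom{s}{d+1}\ge s>s/(d+1)\ge m(S)$ for $s\ge d+2$), so the comparison $g(s,m(S))\ge g(s,m_{\min}(s))$ is legitimate. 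With that one-line addendum the argument is complete.
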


\begin{proof}

Suppose that $S$ is a non-empty subset of the vertex set $V(K)$ of $K$. The claim is obviously true if $|S| = 1$. Assume now that $2\leq |S| = (d+1)(k+1)-n$, where $k + 1 > n \geq 1$. Clearly, we have
\begin{align*}
\frac{f_0(K^{(d)}[S])}{f_d(K^{(d)}[S])}\geq \frac{(d+1)(k+1)-n}{\binom{(d+1)(k+1)-n}{d+1} -(k+1-n)}.
\end{align*}
Thus we need only show that
\begin{align*} \frac{(d+1)(k+1)-n}{\binom{(d+1)(k+1)-n}{d+1} -(k+1-n)} > \frac{f_0(K^{(d)})}{f_d(K^{(d)})}
\end{align*}
holds. Note that
\begin{align*}&\frac{(d+1)(k+1)-n}{\binom{(d+1)(k+1)-n}{d+1} -(k+1-n)}\\
&\qquad =\frac{(d+1)!}{\underbrace{(dk + d + k - n)(dk + d + k - n -1) \dots(dk + k - n + 1 )}_{d\;\text{factors}} - d! + \frac{ndd!}{(d+1)(k+1)-n}}\\
&\qquad\geq \frac{(d+1)!}{(dk + d + k - n)(dk + d + k - n -1)\dots(dk + k - n + 1 ) - d! + \frac{ndd!}{2}}.
\end{align*}
This last line holds, since $(d+1)(k+1)-n\geq 2$ by our assumption on $S$. We then have that
\begin{align*}&\frac{(d+1)!}{(dk + d + k - n)(dk + d + k - n -1) \dots(dk + k - n + 1 ) - d! + \frac{ndd!}{(d+1)(k+1)-n}} \\
&\qquad > \frac{(d+1)!}{(dk + d + k - n)(dk + d + k - n -1)\dots(dk + k - n + 1 ) - d! + \frac{n(d+1)!}{2}}.
\end{align*}
For the expression
\begin{align*}&(dk + d + k - n)(dk + d + k - n -1)\dots(dk + k - n + 1 )
\end{align*}
in the denominator, we have
\begin{align*}&(dk + d + k - n)(dk + d + k - n -1)\dots(dk + k - n + 1 )  \\&\quad= (dk + d + k)(dk + d + k - n -1) \dots(dk + k - n + 1 ) \\ &\qquad- n (dk + d + k - n -1) \dots(dk + k - n + 1 ).
\end{align*}
Consider now the expression $-n(dk + d + k - n -1) \dots(dk + k - n + 1)$.
Note that there are $d-1$ factors in the product $(dk + d + k - n -1) \dots(dk + k - n + 1 )$ and the occurence of an $i$.th factor in the product $(dk + d + k - n -1) \dots(dk + k - n + 1 )$ yields that $d\geq i +1$. We next rewrite the expression
\begin{align*}&-n (dk + d + k - n -1)(dk + d + k - n -2)\\
&\qquad (dk + d + k - n -3) \dots(dk  + d + k - n -(d-1))
\end{align*}
as
\begin{align*}&\underbrace{-n}(\underbrace{(d +1)} +dk + k - n -2)\\
&\qquad (\underbrace{d} +dk + k - n -2)(\underbrace{(d-1)} + dk + k - n -2) \dots(\underbrace{3} + dk + k - n -2).
\end{align*}
Clearly, it contains the term $\frac{-(n)(d+1)!}{2}$ and the expression $dk + k - n -2$ is contained by every factor in the product. If $d\geq 2$ then $dk + k - n -2 \geq 0$ holds and thus we have
\begin{align*}&\frac{(d+1)!}{(dk + d + k - n)(dk + d + k - n -1)\dots(dk + k - n + 1 ) - d! + \frac{n(d+1)!}{2}}\\ 
&\qquad \geq \frac{(d+1)!}{(dk + d + k)(dk + d + k - n -1)\dots(dk + k - n + 1 ) - d!}.
\end{align*}
Since
\begin{align*}&\frac{(d+1)!}{(dk + d + k)(dk + d + k - n -1)\dots(dk + k - n + 1 ) - d!}\\
&\qquad > \frac{(d+1)!}{(dk + d + k)(dk + d + k - 1)\dots(dk + k + 1 ) - d!},
\end{align*}
it follows that
\begin{align*}&\frac{(d+1)!}{(dk + d + k - n)(dk + d + k - n -1)\dots(dk + k - n + 1 ) - d! + \frac{n(d+1)!}{2}}\\
&\qquad > \frac{(d+1)!}{(dk + d + k)(dk + d + k - 1)\dots(dk + k + 1 ) - d!}\\
&\qquad = \frac{f_0(K^{(d)})}{f_d(K^{(d)})}.
\end{align*}
If $d =1$, then we obviously have that
\begin{align*}\frac{2}{2k + 1 - n - 1 + n} = \frac{f_0(K^{(1)})}{f_1(K^{(1)})}.
\end{align*}

Suppose now that $2\leq |S| = (d+1)(k+1)-n$, where $n\geq k+1$. Then
\begin{align*}&\frac{f_0(K^{(d)}[S])}{f_d(K^{(d)}[S])}\geq \frac{(d+1)(k+1)-n}{\binom{(d+1)(k+1)-n}{d+1}}
\end{align*} 
holds. We have that
\begin{align*}&\frac{(d+1)(k+1)-n}{\binom{(d+1)(k+1)-n}{d+1}}\\
&\qquad = \frac{(d+1)!}{\underbrace{(dk + d + k - n)(dk + d + k - n -1) \dots(dk + d + k -d - n +1)}_{d\;\text{factors}}- d! + d!}.
\end{align*}
It then follows that
\begin{align*}&\frac{(d+1)(k+1)-n}{\binom{(d+1)(k+1)-n}{d+1}} \geq \frac{(d+1)!}{(dk + d - 1)(dk + d -2) \dots(dk + d -d)- d! + d!},
\end{align*}
since $n\geq k+1$. The expression $(dk + d - 1)(dk + d -2) \dots(dk + d -d)$ is equal to $(dk + d)(dk + d -2) \dots(dk + d -d) - (dk + d -2)(dk + d -3) \dots(dk + d -d)$. Note that the expression
\begin{align*}&- (dk + \underbrace{d} -2)(dk + \underbrace{(d-1)} -2) (dk + \underbrace{(d-2)} -2)\dots(dk + \underbrace{(2)} -2)
\end{align*}
contains the term $-(d!)$ and $dk-2\geq 0$ whenever $d\geq 2$. We therefore get that \begin{align*}&\frac{(d+1)(k+1)-n}{\binom{(d+1)(k+1)-n}{d+1}}\\
&\qquad  \geq \frac{(d+1)!}{(dk + d - 1)(dk + d -2) \dots(dk + d -d)- d! + d!} \\
&\qquad \geq \frac{(d+1)!}{(dk + d)(dk + d -2)(dk + d -3) \dots(dk + d -d)- d!}  \\
&\qquad > \frac{(d+1)!}{(dk + d + k)(dk + d -2 + (k+1))(dk + d -3 + (k+1)) \dots(dk + (k+1))- d!} \\
&\qquad  = \frac{f_0(K^{(d)})}{f_d(K^{(d)})}.
\end{align*}
This completes the proof.

\end{proof}

\begin{rem} Recall that a $k$-dimensional octahedral sphere is a $(k+1)$-fold join of two isolated points. It is well-known in random graph theory that the $1$-skeleton of a $k$-dimensional octahedral sphere is a strictly balanced graph and $n^{\frac{-1}{k}}$ is a sharp threshold function for the random graph $G_1(n,p)$ to contain the $1$-skeleton of a $k$-dimensional octahedral sphere. We remark that Lemma~\ref{d-skeleton of joins of boundaries of d-simplexes}, taken together with Theorem~\ref{thm: threshold for a subcomplex containment} reduces to this fact when $d = 1$. 
\end{rem}

It was shown by Kahle that if $p^{k} n \rightarrow  \infty$ and $p^{k+1} n \rightarrow 0$ as $n\rightarrow \infty$ then $\Delta_1(G_1(n,p))$ a.a.s. retracts onto a sphere $S^k$ and so $\Delta_1(G_1(n,p))$ a.a.s. has nonvanishing integer $k$-th homology (see Theorem 3.5. in~\cite{Kahle}). We generalize this argument by Theorem~\ref{thm: vanishing results on d-clique complex of random simplicial complexes} (ii) so that it applies to the random $d$-clique complexes:

\begin{proof}[{\bf The proof of Theorem \ref{thm: vanishing results on d-clique complex of random simplicial complexes}}] Claim (i) is immediate from Theorem~\ref{thm: connectivity result for d-clique complex} and Theorem~\ref{thm: a vanishing homology result} by taking into account the threshold for the dimension of the random $d$-clique complex $\Delta_d(G_d(n,n^{\alpha}))$. To prove (ii), consider the $(k+1)$-fold join $K:= *_k\partial(\Delta_{d+1})$ of boundaries of $d$-dimensional simplexes. Recall that $n^{-\frac{f_0(K^{(d)})}{f_d(K^{(d)})}} = n^{-\frac{(d+1)(k+1)}{\binom{(d+1)(k+1)}{d+1}-(k+1)}}$ is a threshold function for $G_d(n,p)$ containing the $d$-skeleton of the complex $K$ as a subcomplex by Lemma~\ref{thm: threshold for a subcomplex containment} together with Lemma~\ref{d-skeleton of joins of boundaries of d-simplexes}; i.e. if $\alpha > -\frac{f_0(K^{(d)})}{f_d(K^{(d)})}$ then $G_d(n,p)$ a.a.s. contains the $d$-skeleton of $K= *_k\partial(\Delta_{d+1})$ as a subcomplex, and if $\alpha < -\frac{f_0(K^{(d)})}{f_d(K^{(d)})}$ then $G_d(n,p)$ a.a.s. does not contain $K= *_k\partial(\Delta_{d+1})$. By the assumption $\alpha > \frac{-1}{t}$, we conclude that $G_d(n,p)$ a.a.s. contains the $d$-skeleton of $K= *_k\partial(\Delta_{d+1})$. Let us choose a $(d-1)$-dimensional face $F_m$ from each of the factor $\partial(\Delta^m_{d+1})$ of the $(k+1)$-fold join $K= *_k\partial(\Delta_{d+1})$, where $1\leq m\leq k+1$. Set
\begin{align*}&\A =\{F_m\colon 1\leq m \leq k + 1\},\end{align*}
\begin{align*}&S_\A= \bigcup_{m\in [1,k+1]} F_m
\end{align*}
and
\begin{align*}&\NE(\A): = \{x\in \Delta_d(G_d(n,p))\colon F\cup \{x\} \in \Delta_d(G_d(n,p))\;\textrm{for each}\;d-\textrm{subset}\; F\subseteq S_\A\}.
\end{align*}
It then follows that the conditional probability that $\NE(\A)\neq \emptyset$ for $\A$ is no more than
 \begin{align*} p^{\binom{(k+1)d}{d}}(n-(k+1)(d+1)) + p (k+1) \leq p^{t+1} (n-(k+1)(d+1)) + p (k+1) = o(1).
\end{align*}
So a.a.s. $G_d(n,p)$ contains the $d$-skeleton of $K= *_k\partial(\Delta_{d+1})$ in which $\NE(\A) = \emptyset$. Note that in this case this subcomplex is indeed an induced subcomplex of $G_d(n,p)$, since we must have that $\{x^m_{d+1}\} \cup F_m \notin G_d(n,p)$ for any choice of $m$, where $x^m_{d+1} \in V(\Delta^m_{d+1})\setminus F_m$ and $F_m\in \A$. Note also that $\Delta_d(K)$ is a subcomplex of $\Delta_d(G_d(n,p))$ and  $\Delta_d(K)$ is homeomorphic to $S^{(k+1)d -1}$.
\end{proof}

\begin{rem}
In the proof of Theorem~\ref{thm: vanishing results on d-clique complex of random simplicial complexes} (ii), we have used the inequality $t + 1\leq \binom{d(k+1)}{d}$. To observe this, it is enough to see that $\binom{(d+1)(k+1)-1}{d} + d - (d+1) \binom{(k+1)d}{d}\leq 0$. Let $K:= *_k\partial(\Delta_{d+1})$ be the $(k+1)$-fold join of boundaries of $d$-dimensional simplexes and let $\{F_{mi}\colon 1\leq i\leq d+1\}$ denote the set of all $(d-1)$-dimensional faces in the factor $\partial(\Delta^m_{d+1})$ of the $(k+1)$-fold join $K$, where $1\leq m\leq k+1$. Set $X_i= F_{11}\cup \bigcup_{l=2}^{k+1} F_{li}$ with $1\leq i\leq d+1$. Let $K'$ denote the subcomplex of $K$ obtained by removing the vertex $x$ from $\partial(\Delta^1_{d+1})$ with $x\notin F_{11}$. Note that $\binom{d(k+1)}{d}$ counts the number of $(d-1)$-dimensional faces in each $X_i$ and thus $(d+1) \binom{d(k+1)}{d}$ counts the number of $(d-1)$-dimensional faces in $K'$ with some repetations-in particular the face $F_{11}$ is repeated $(d+1)$ times. On the other hand, $\binom{(d+1)(k+1)-1}{d}$ counts the number of $(d-1)$-dimensional faces in $K'$. We therefore have that $\binom{(d+1)(k+1)-1}{d} \leq (d+1) \binom{(k+1)d}{d} - d$.
\end{rem}

\section*{Acknowledgement}
I would like to thank all the contributers of the ``First Research School on Commutative Algebra and Algebraic Geometry'' (RSCAAG) organized by the Institute for Advanced Studies in Basic Sciences (IASBS) and the Institute for Research in Fundamental Sciences (IPM).

I would like to express my particular thanks to Eric Babson through whom I learnt about  random simplicial complexes for his amazing lectures in the research school  RSCAAG, as well as for the research sessions and the problems, among one of all is the subject of this note, for his time to look over my draft and for his invaluable comments.


\begin{thebibliography}{00}

\bibitem{aharoni} R. Aharoni and E. Berger,
\newblock{\em The intersection of a matroid and a simplicial complex,}
\newblock{Transactions of the American Mathematical Society, {\bf 358:11}, (2006), 4895-4917.}

\bibitem{Babson} E. Babson, 
\newblock{\em Random Simplicial Complexes: Notes, First Research School on Commutative Algebra and Algebraic Geometry, August 5-17, 2017, Zanjan, Iran.}

\bibitem{babson} E. Babson,
\newblock{\em Fundamental Groups of Random Clique Complexes,}
\newblock{arXiv:1207.5028, (2012).}

\bibitem{costa} A. Costa, M. Farber and D. Horak,
\newblock{\em Fundamental groups of clique complexes of random graphs,}
\newblock{Trans. London Math. Soc., {\bf 2(1)}, (2015), 1-32.}

\bibitem{erdos} P. Erd\H os, A. R\' enyi,
\newblock{\em On Random Graphs I,}
\newblock{Publ. Math. Debrecen, {\bf 6}, (1959), 290-297.}

\bibitem{hatcher} A. Hatcher,
\newblock{\em Algebraic Topology,}
\newblock{Cambridge University Press, (2002).}

\bibitem{linial} N. Linial, R. Meshulam,
\newblock{\em Homological connectivity of random 2-complexes,}
\newblock{Combinatorica, {\bf 26:4}, (2006), 475-487.}

\bibitem{meshulam2} R. Meshulam,
\newblock{\em Domination Numbers and Homology,}
\newblock{Journal of Combinatorial Theory Series A, {\bf 102:2}, (2003), 321-330.}

\bibitem{meshulam3} R. Meshulam,
\newblock{\em The clique complex and hypergraph matching,}
\newblock{Combinatorica, {\bf 21:1}, (2001), 89-94.}

\bibitem{meshulam} R. Meshulam, N. Wallach,
\newblock{\em Homological connectivity of random k-dimensional complexes,}
\newblock{Random Structures Algorithms, {\bf 34:3 }, (2009), 408-417.} 

\bibitem{Kahle} M. Kahle,
\newblock{\em Topology of random clique complexes,}
\newblock{Discrete Math., {\bf 309:6}, (2009), 1658-1671.}

\bibitem{kahle1} M. Kahle,
\newblock{\em Sharp vanishing thresholds for cohomology of random flag complexes,}
\newblock{Ann. of Math., {\bf 179}, (2014), 1085-1107.}

\bibitem{kahle2} M. Kahle,
\newblock{\em Topology of random simplicial complexes: a survey,}
\newblock{In Algebraic topology: applications and new directions, Contem. Math., {\bf 620}, (2014), 201-221.}

\bibitem{kahle3} M. Kahle, E. Meckes,
\newblock{\em Limit theorems for Betti numbers of random simplicial complexes,}
\newblock{Homology Homotopy Appl., {\bf 15(1)}, (2013), 343-374.}

\end{thebibliography}
\end{document}